\theoremstyle{plain}
\newtheorem{theorem}{Theorem}[section]
\newtheorem{lemma}[theorem]{Lemma}
\newtheorem{proposition}[theorem]{Proposition}
\theoremstyle{remark}
\newtheorem{remark}[theorem]{Remark}
\numberwithin{equation}{section}
\title[Existence and obstructions for the curvature]{Existence and obstructions for the curvature  on compact manifolds\\ with boundary}
\author[T. Cruz]{Tiarlos Cruz}
\address[T. Cruz]{ Institute of Mathematics, 
	%\newline\indent 
	Federal University of Alagoas
	\newline\indent 
	57072-970, Maceió-AL, Brazil}
\email{\href{mailto: cicero.cruz@im.ufal.br}{cicero.cruz@im.ufal.br}}
\author[A. Silva Santos]{Almir Silva Santos}
\address[A. Silva Santos]{Department of Mathematics, 
	%\newline\indent 
	Federal University of Sergipe
	\newline\indent 
	49100-000, Sao Cristov\~ao-SE, Brazil}
\email{\href{mailto: almir@mat.ufs.br}{ almir@mat.ufs.br}}
\author[F. Vit\'orio]{Feliciano Vit\'orio}
\address[F. Vit\'orio]{Department of Mathematics, 
	%\newline\indent 
	Federal University of Alagoas
	\newline\indent 
	57072-970, Maceió-AL, Brazil}
\email{\href{mailto: cicero.cruz@im.ufal.br}{feliciano@pos.mat.ufal.br}}
\thanks{ TC was partially supported by CNPq grant number 307419/2022-3 and 405468/2021-0. ASS was partially supported by CNPq grant number 403349/2021-4 and 312027/2023-0. The authors were  supported by CNPq grant 408834/2023-4 and FAPEAL, Brazil (Process E:60030.0000002329/2022)}
\subjclass[2020]{58J32, 35B09, 35J60}
\keywords{Prescribed curvature problem, Conformal metric%, Geometric equation, Deformation of metrics
}
\begin{document}
\maketitle
\begin{abstract}
We study the set of curvature functions which a given compact manifold with boundary can possess. First, we prove that the  sign  demanded by the Gauss-Bonnet Theorem is a necessary and sufficient condition  for a given  function to be the  geodesic curvature  or the Gaussian curvature of some  {\it conformally equivalent metric}. Our approach allows us to solve problems that are impossible to solve in the pointwise conformal case. Moreover, we obtain a deep and more delicate information on pointwise conformal deformations. We prove new existence and nonexistence results for metrics with prescribed
curvature in the conformal setting, which depend on the Euler characteristic. \end{abstract}

 %\tableofcontents

\section{Introduction}

Motivated  by the Uniformization Theorem, M. S. Berger \cite{Be} studied the problem of finding  a Riemannian metric in a given closed surface $M$ with prescribed Gaussian curvature $f\in C^\infty(M)$. He found a sufficient condition when the Euler characteristic is negative, namely, $f<0$. In a series of papers \cite{K,KW1,KW4,KW2,KW3}, J. Kazdan and F. Warner completely solved this problem and investigated the problem of prescribing the Gaussian curvature (if dim $M=2$) and scalar curvature (if dim $M\geq 3$) in a closed manifold. They also addressed the problem of finding a metric that is either
conformally equivalent or pointwise conformal to some prescribed metric $g$ on $M$, such that it has $f$ as its scalar curvature.

We say that two metrics $g$ and $g_0$ are {\it pointwise conformal} if there exists $u\in C^\infty(M)$ such that $g=e^ug_0$, whereas we say they are {\it conformally equivalent} if there exists a diffeomorphism $\varphi$ of $M$ such that $\varphi^*g$ and $g_0$ are pointwise conformal. Note that the first definition is a particular case of the second one. Besides, any two metrics on a closed surface $M$ are pointwise conformal due to the Uniformization Theorem, which no longer holds for dimensions greater than two. The advantage of the conformally equivalent setting is that the diffeomorphism provides flexibility to solve problems, which is impossible in the pointwise conformal case.

In analogy with what happens in the closed case, J. F. Escobar \cite{E2,E1,E3} raised the problem of finding a metric of constant scalar curvature and constant mean curvature on the boundary in a compact Riemannian manifold $M$ with nonempty boundary and dimension $n\geq 3$. Naturally, this leads to the problem of whether we can realize functions, instead of constants,  to be the curvature of some Riemannian metric. In \cite{EOB}, J. F. Escobar worked on this problem in the case where the scalar curvature vanishes identically. He found the necessary and sufficient conditions for a smooth function to be the mean curvature of a pointwise conformal scalar flat metric.

 Although restricting a priori to conformal deformations may seem restrictive, it is
helpful from a geometric viewpoint. It leads us to solve a deceptively innocent-looking nonlinear elliptic equation, whose existence theory has been given using different techniques, including direct methods of calculus of variations, blow-up analysis, and Liouville theorems, see e.g. \cite{ACA,AYM,MR3473446,CY,MR4460167,CR,DMA,MR4546499, MR4517687,XZ}.

Our main task is to investigate conditions for prescribed Gaussian and geodesic curvatures (for $n=2$) or scalar and mean curvatures (for $n\geq 3$) 
 of a metric  conformally
 equivalent to $g$, or even more, pointwise conformal to $g$. With this goal in mind, we introduce the following definitions:
 \begin{eqnarray*}
      \mbox{CE($g$)}&:=&\{f\in C^\infty(M):R_{\overline g}=f\mbox{ and }H_{\overline g}=0\mbox{ for some metric $\overline g$}\\
      & &\mbox{conformally equivalent to $g$}\}.\\
       \mbox{CE$^0$($g$)}&:=&\{h\in C^\infty(\partial M):R_{\overline g}=0\mbox{ and }H_{\overline g}=h\mbox{ for some metric $\overline g$}\\
      & &\mbox{conformally equivalent to $g$}\}.\\
      \mbox{PC($g$)}&:=&\{f\in C^\infty(M):R_{\overline g}=f \mbox{ and }H_{\overline g}=0  \mbox{ for some  $\overline g\in[g]$}\}.\\
      \mbox{PC$^0$($g$)}&:=&\{h\in C^\infty(\partial M):R_{\overline g}=0 \mbox{ and }H_{\overline g}=h \mbox{ for some  $\overline g\in[g]$}\}.
 \end{eqnarray*}
Here, $R_{\overline g}$ and $H_{\overline g}$ are the scalar and the mean curvatures of $\overline g$, respectively, when $n\geq 3$, and they are replaced by the Gaussian and geodesic curvatures, respectively, when $n=2$. The notation $[g]$ denotes the set of Riemannian metrics on $M$  that are pointwise conformal to $g$.

Let $(M, g)$ be a compact Riemannian surface with boundary. The Gauss-Bonnet Theorem states that 
\begin{equation}\label{GB}
\int_MK_gdv+\int_{\partial M}\kappa_gda=2\pi\chi(M),    
\end{equation}
where $K_g$ and $\kappa_g$ are the Gaussian and geodesic curvatures, and $\chi(M)$ is the Euler characteristic of $M.$ 
In particular, this formula gives a necessary sign condition on the Gaussian curvature of a surface or geodesic curvature on the boundary in terms of its Euler characteristic.  For example, if  $K_g\equiv 0$ (resp. $\kappa_g\equiv 0$), by the Gauss–Bonnet theorem,
\begin{eqnarray}\label{C1}
& & \mbox{If $\mathcal{X}(M)>0$, then $\kappa_g$ (resp. $K_g$) is positive somewhere.} \nonumber\\
& & \mbox{If $\mathcal{X}(M)=0,$ then  $\kappa_g$ (resp.  $K_g$) changes sign unless $\kappa_g\equiv 0$.}\\
& &\mbox{If  $\mathcal{X}(M)<0,$ then $\kappa_g$ (resp. $K_g$)  is negative somewhere.}\nonumber
\end{eqnarray}

Our first result gives the converse to this sign condition as follows:
\begin{theorem}\label{teoa} 
Let $(M,g)$ be a  compact Riemannian surface with boundary.  \begin{itemize}
    \item[(a)] If $K_g\equiv 0$, then  $CE^0(g)$ is precisely the set of functions that satisfies the sign condition (\ref{C1}).
    \item[(b)]  If $\kappa_g\equiv 0$, then $CE(g)$ is precisely the set of functions that satisfies the sign condition (\ref{C1}).
\end{itemize}

\end{theorem}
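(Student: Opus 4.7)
The plan is to establish necessity via the Gauss-Bonnet formula and sufficiency through a conformal-change PDE supplemented, where necessary, by a diffeomorphism argument. Parts (a) and (b) are structurally symmetric -- part (a) reduces to a harmonic problem with nonlinear boundary condition, while part (b) reduces to a Liouville-type equation with Neumann boundary condition -- so I will sketch the strategy for (a) in detail and indicate the adjustments for (b).

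Necessity is immediate: applying Gauss-Bonnet to $\bar g$ in case (a) gives $\int_{\partial M}h\, ds_{\bar g}=2\pi\chi(M)$, forcing $h$ to take the sign of $\chi(M)$ somewhere, exactly as in (\ref{C1}); case (b) is analogous with the interior identity $\int_M f\, dA_{\bar g}=2\pi\chi(M)$.

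For sufficiency in (a), writing $\bar g = \varphi^*(e^{2u}g)$ and using $K_g\equiv 0$, the conditions $K_{\bar g}=0$ and $\kappa_{\bar g}=h$ reduce to
\[
\Delta_g u = 0 \text{ in } M, \qquad \partial_\nu u + \kappa_g = \tilde h\, e^u \text{ on } \partial M,
\]
where $\tilde h=h\circ\varphi^{-1}$. The task is then, given $h$ satisfying (\ref{C1}), to produce a diffeomorphism $\varphi$ and a function $u$ solving this mixed harmonic/nonlinear-boundary problem. I would split by the sign of $\chi(M)$. For $\chi(M)<0$, constrained minimization of $\tfrac12\int_M|\nabla u|^2dA+\int_{\partial M}\kappa_g u\, ds$ on $\{u:\int_{\partial M}\tilde h e^u ds=2\pi\chi(M)\}$ gives a solution: the sign condition makes the constraint set nonempty and the boundary Moser-Trudinger inequality supplies coercivity. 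For $\chi(M)>0$, so $M$ is a disk, a sub- and super-solution argument produces a solution once $\tilde h$ is positive somewhere. The $\chi(M)=0$ case (annulus or Möbius band) is the most delicate: if $h\equiv 0$ the problem linearizes to a Neumann problem, while otherwise the sign change of $\tilde h$ must be exploited at the critical constraint $\int_{\partial M}\tilde h e^u ds=0$.

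The diffeomorphism $\varphi$ becomes essential precisely when the pointwise conformal problem (the case $\varphi=\mathrm{id}$) fails because $h$ satisfies only the sign condition and not the auxiliary integrated compatibility needed by the PDE. In such cases I would use the flexibility of the diffeomorphism group of the surface to isotope the mass of $h$ appropriately -- choosing $\varphi$ so that, for instance, $\int_{\partial M}(h\circ\varphi^{-1})e^{v_0}ds=2\pi\chi(M)$ for a canonical harmonic reference function $v_0$. The main analytic obstacle throughout is the critical exponential boundary nonlinearity: ruling out concentration and blow-up in the variational schemes requires the sharp Moser-Trudinger trace inequality and, in the borderline $\chi(M)=0$ case, potentially a Pohozaev-type identity. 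Part (b) is handled by the same scheme applied to $\Delta u+\tilde f e^{2u}=K_g$ in $M$ with Neumann condition $\partial_\nu u=0$ on $\partial M$, where $\tilde f=f\circ\varphi^{-1}$.
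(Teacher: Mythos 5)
Your necessity argument matches the paper's (Gauss--Bonnet), but the sufficiency scheme has a genuine gap, concentrated exactly where the theorem is hardest. You propose to solve the pointwise conformal PDE directly and to invoke the diffeomorphism only to restore an ``integrated compatibility'' condition. This does not work: in every sign case the pointwise conformal problem has obstructions that are not integral identities. For $\chi(M)>0$ (the disk) the sub/supersolution method fails outright --- a subsolution for $\partial_\nu u+c=\tilde h e^u$ with $c>0$ forces $\tilde h>0$ everywhere, and Escobar's Kazdan--Warner-type identity (cited in Section 4.2.3 of the paper) shows that there are smooth $h$, positive somewhere, for which \emph{no} pointwise conformal solution exists, for any choice of the single integral you propose to adjust. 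For $\chi(M)<0$ the paper's own Theorem \ref{teo001} exhibits $\kappa$ with the correct integral sign that still fail to lie in $PC^0(g)$; the true obstruction there is the positivity of the solution of an auxiliary linear problem, again not something your one-parameter adjustment of $\int_{\partial M}(h\circ\varphi^{-1})e^{v_0}\,ds$ is shown to control. And the $\chi(M)=0$ case you leave open. So the plan reduces the theorem to statements that are false (for $\chi>0$), insufficient (for $\chi<0$), or missing (for $\chi=0$).

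The paper's mechanism is entirely different and avoids the variational analysis. After the Osgood--Phillips--Sarnak uniformization one may take $K_g\equiv 0$ and $\kappa_g\equiv c$ constant, so $F_1(0)=(0,c)$. The Perturbation Theorem \ref{perttheo} produces $u_1$ near $0$ with $F_1'(u_1)$ invertible, the Inverse Function Theorem then gives an open $L^p\oplus W^{1/2,p}$-neighborhood of $F_1(u_1)$ consisting of attainable data, and the Approximation Lemma \ref{appr} says the orbit $\{\kappa\circ\varphi\}$ under diffeomorphisms is dense in $\{\,\text{functions with values in }[\min\kappa,\max\kappa]\,\}$, so it meets that neighborhood provided $\min\kappa<c<\max\kappa$. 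The sign condition \eqref{C1} is used only to find $m>0$ with $\min(m\kappa)<c<\max(m\kappa)$, and the solution is rescaled at the end. In short, the diffeomorphism is not a patch for a compatibility integral; it is the main engine, moving $\kappa$ into the open set of solvable data supplied by the inverse function theorem. If you want to salvage your route, you would have to prove sharp solvability of the pointwise conformal problem in each sign case (which is the still-open Nirenberg-type problem for $\chi>0$); the paper's argument is designed precisely to bypass that.
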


Theorem \ref{teoa} shows that a question, originally proposed by J. Kazdan and F. Warner in \cite[Question 3, p. 16]{KW1} regarding the solution of the prescribed Gaussian curvature problem for a conformally equivalent metric, can be extended to and holds even for surfaces with  boundary.  Consequently, we present extensions for compact surfaces with boundary of Theorems 6.2 and 11.6 of \cite{KW1}, and Theorem 5.6 of \cite{KW2}.

The new existence results for metrics with prescribed curvatures given by Theorem \ref{teoa} are obtained by applying the methods and results from \cite{CV,KW1, KW2}. The method used to prove the Theorem \ref{teoa} consists in defining a map $F:W^{2,p}(M)\to L^p(M)\oplus W^{\frac{1}{2},p}(\partial M)$, such that if $F(u)=(f,h)$, the Gaussian and geodesic curvature of the metric $e^{2u}g$ is $f$ and $h$, respectively. Given two smooth function $f\in C^\infty(M)$ and $h\in C^\infty(\partial M)$, the Theorem \ref{teoa} is proved if show the existence of some function $u$ with $F(u)=(0,\kappa\circ \varphi)$, for item (a), or $F(u)=(f\circ \varphi,0)$, for item (b), for some diffeomorphism $\varphi$. To solve this problem we use the following ingredients: 
\begin{enumerate}
    \item[1.] Due to the celebrated {\it Uniformization Theorem} for surfaces with boundary \cite{OPS}, we can assume that $g$ has constant Gaussian curvature $K_0$ and constant geodesic curvature $\kappa_0$, with $K_0=0$ in item (a), and $\kappa_0=0$ in item (b).
    \item[2.] The \textit{Inverse Function Theorem:} If  $F(u_0)=(K_0,\kappa_0),$ then we can solve $F(u)=(K,\kappa),$ for $(K,\kappa)$ sufficiently close to $(K_0,\kappa_0)\in L^{p}(M)\oplus W^{\frac{1}{2},p}(\partial M)$, where $p>4$, provided that the linearization of $F$ is invertible at $u_0$;
    \item[3.]  An \textit{Approximation Theorem} which gives necessary and sufficient conditions for $(K,\kappa)$ to be in the $L^{p}(M)\oplus W^{\frac{1}{2},p}(\partial M)$  closure of the orbit of a continuous function $(K,\kappa)$ under the group of diffeomorphisms of $M$;
\item[4.] Since $F(u_0)=(K_0,\kappa_0)$ may not be invertible, we prove a  \textit{Perturbation Theorem} which gives the existence of functions arbitrarily close to $u_0$ which the linearization of $F$ at these functions is invertible.
\end{enumerate}

We also obtain a more subtle piece of information on the pointwise conformal deformation. Under the hypothesis of Theorem \ref{teoa}, we show  
  \begin{itemize}
      \item[(1)] $\chi(M)<0$: In contrast with Theorem \ref{teoa}, there exists $\kappa\in C^\infty(\partial M)$ (resp. $K\in C^\infty(M)$) which is negative somewhere and $\kappa\not\in PC^0(g)$ (resp. $K\not\in PC(g)$). See Theorem \ref{teo001}, where we have established necessary and sufficient conditions, in addition to \eqref{C1}, for the solvability of  pointwise conformal problem. This shows that Theorem \ref{teoa} does not holds in the pointwise conformal case and the conformally equivallet approach is more flexible.
      \item[(2)] $\chi(M)=0$: There are necessary and sufficient conditions to a function $\kappa\in C^\infty(\partial M)$ (resp. $K\in C^\infty(M)$) to belongs to $PC^0(g)$ (resp. $PC(g)$), respectively. See Theorem \ref{PCzerocarac}.
      \item[(3)] $\chi(M) > 0$: We introduce new conditions to ensure that functions belong to PC($g$) and PC$^0$($g$). %In this case, there are nontrivial obstructions. 
      See Section \ref{sec003}.
  \end{itemize}

In the literature, a comprehensive study is presented regarding the sets PC$^0(g)$ and PC$(g)$. For instance, K. C. Chang and J. Q. Liu \cite{MR1417436} established conditions for a positive function $\kappa$, defined on the boundary of a disk, belonging to PC$^0(g)$. Subsequently, P. Liu and W. Huang \cite{MR2103946} investigated this problem when $\kappa$ possesses certain symmetries. This  case was also addressed in \cite{MR2178789, phdthesis-liu}, where a Moser-Trudinger inequality on the boundary of a compact surface was proved. Refer to \cite{MR2274940} as well, where Y. X. Guo and J. Q. Liu performed a blow-up analysis in order to study the function space $\mathrm{PC}^0(g)$. On the other hand, the case PC$(g)$ was treated by S. Y. A. Chang and P. C. Yang \cite{MR925123}. They obtained sufficient conditions on $K$ defined in a domain $D$ on the round sphere, with $\operatorname{Area}(D) < 2 \pi$ or $D$ being a hemisphere. The case $2\pi < \operatorname{Area}(D) < 4\pi$ was addressed by K. Guo and S. Hu in \cite{MR1360827}.

There are few results regarding the problem of prescribing Gaussian and geodesic curvature simultaneously. This problem was studied in \cite{MR4517687} using a variational approach and blow-up analysis, with a dependence on the Euler characteristic. Further insights can be found in \cite{phdthesis, MR4460167, CR, MR4546499}.

Given a Riemannian manifold $(M,g)$ with boundary and dimension $n\geq 3$, we denote by $\mathcal L_g$ the conformal Laplacian $-\frac{4(n-1)}{n-2}\Delta_g +R_{g} $, and by $\mathcal B_g$  the boundary conformal operator $\frac{2}{n-2} \frac{\partial }{\partial \nu_g}+H_{g} $, where $\nu_g$ is  the outward unit normal on $\partial M$. Consider the following two lowest eigenvalues of the boundary linear problem $(\mathcal L_g,\mathcal B_g)$, see Section \ref{sec001},
\begin{equation}\label{eing1}
\mathcal L_g\varphi=\lambda_1(\mathcal L_g)\varphi  \mbox{ in }M\quad\mbox{ and }\quad 
\mathcal B_g\varphi=0 \mbox{ on }\partial M
\end{equation}
and 
\begin{equation}\label{eing2}
\mathcal L_g\varphi=0  \mbox{ in }M\quad \mbox{ and }\quad
\mathcal B_g\varphi=\sigma_1(\mathcal B_g)\varphi \mbox{ on $\partial M$}.
\end{equation}
The signs of $\lambda_1(\mathcal L_g)$ and $\sigma_1(\mathcal B_g)$ are crucial in our study.
 Not only because they are conformal invariant (as discussed in \cite{E1,E3}), but also because we will see that they play a similar role to the sign of the Euler characteristic in our investigation of geodesic and Gaussian curvatures on compact surfaces with boundary.
 
For higher dimensions, we present the following extension to compact manifolds with boundary, building upon the results established by J. Kazdan and F. Warner in \cite[Theorem 3.3]{KW3} and \cite[Theorem 6.2]{KW2}.
\begin{theorem}\label{teob}
Let $(M^n,g)$ be a compact Riemannian manifold with nonempty boundary and dimension  $n\geq 3$.
\begin{itemize}
    \item[(a)] If $\sigma_1(\mathcal B_g)<0$, then CE$^0(g)$ is the set of smooth functions on $\partial M$ that are negative somewhere.
    \item[(b)] If $\sigma_1(\mathcal B_g)=0$ , then CE$^0(g)$  is the set of smooth functions on $\partial M$ that either change sign or are identically zero on $\partial M$.
    \item[(c)] If $\sigma_1(\mathcal B_g)>0$, then CE$^0(g)$ is the set of smooth functions on $\partial M$ that are positive somewhere.
\end{itemize}
\end{theorem}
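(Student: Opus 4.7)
The plan is to handle necessity and sufficiency separately, since the latter is substantially deeper.

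\emph{Necessity.} The sign of $\sigma_1(\mathcal B_g)$ is both a conformal invariant and invariant under pullback by diffeomorphisms, so every metric $\bar g$ conformally equivalent to $g$ shares the sign of $\sigma_1(\mathcal B_{\bar g})$. Assume $\bar g$ satisfies $R_{\bar g}=0$ and $H_{\bar g}=h$; since $R_{\bar g}=0$, the constant function $1$ solves $\mathcal L_{\bar g}u=0$ in $M$. Let $\varphi_1>0$ be the first eigenfunction of \eqref{eing2} for $\bar g$. Green's identity applied to the pair $(1,\varphi_1)$, together with $\mathcal B_{\bar g}(1)=h$ and $\mathcal B_{\bar g}\varphi_1=\sigma_1(\mathcal B_{\bar g})\varphi_1$, yields
\[
\int_{\partial M} h\,\varphi_1\,dA_{\bar g}\;=\;\sigma_1(\mathcal B_{\bar g})\int_{\partial M}\varphi_1\,dA_{\bar g}.
\]
Since $\varphi_1>0$, this identity forces exactly the sign condition on $h$ in each of (a), (b), (c).

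\emph{Sufficiency---a two-step strategy.} Following the Kazdan-Warner philosophy, I would first normalize $g$ within its pointwise conformal class and then exploit the extra flexibility of a diffeomorphism. In Stage 1 (reference metric), since the sign of $\sigma_1(\mathcal B_g)$ is exactly the obstruction in the boundary Yamabe problem, Escobar's existence results furnish $g_0\in[g]$ with $R_{g_0}=0$ and $H_{g_0}$ of constant sign matching $\sigma_1(\mathcal B_g)$ (strictly positive in case (c), strictly negative in (a), identically zero in (b)). In Stage 2 (deformation), I seek the desired metric in the form $\bar g=v^{4/(n-2)}\psi^*g_0$ for a boundary-preserving diffeomorphism $\psi$ of $M$ and a positive function $v$. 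Setting $\tilde v=v\circ\psi^{-1}$ and using $R_{\psi^*g_0}=0$ and $H_{\psi^*g_0}=H_{g_0}\circ\psi$, the system \eqref{confchange} reduces to
\[
\Delta_{g_0}\tilde v=0\ \text{in}\ M,\qquad \tfrac{2}{n-2}\partial_{\nu_{g_0}}\tilde v + H_{g_0}\tilde v = (h\circ\psi^{-1})\,\tilde v^{n/(n-2)}\ \text{on}\ \partial M.
\]
The role of $\psi$ is to reshape $h$ into a function $h\circ\psi^{-1}$ for which this Steklov-type nonlinear boundary problem admits a positive solution.

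\emph{Transplantation and PDE step.} The key technical tool is a transplantation lemma: given two functions on $\partial M$ satisfying the same sign condition of (a), (b), or (c), there is a diffeomorphism of $M$ fixing $\partial M$ setwise that carries the first to one with the sign profile of the second. I would construct such a $\psi$ by flowing along tangential vector fields built from bump functions concentrated near the zero set or sign-change locus of $h$. Once $\psi$ is chosen so that $h\circ\psi^{-1}$ has favorable pointwise sign, the nonlinear boundary problem can be solved variationally via an Escobar-type functional or by sub/supersolutions in cases (a) and (c); case (b) is handled by a perturbation or degree argument after the correct integral compatibility has been arranged.

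\emph{Main obstacle.} I expect the hardest part to be the degenerate case (b): with $H_{g_0}\equiv 0$, the linearized boundary operator has the constant kernel, so solvability of the nonlinear problem for a sign-changing $h$ requires $h\circ\psi^{-1}$ to satisfy a precise integral compatibility against the constants. Choosing $\psi$ to simultaneously reshape $h$ geometrically and deliver this integral balance is exactly the new flexibility coming from the conformally equivalent setting, and is what a purely pointwise conformal approach cannot achieve.
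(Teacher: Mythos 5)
Your necessity argument is correct and is essentially the paper's: pairing the constant function $1$ (which satisfies $\mathcal L_{\bar g}1=0$ since $R_{\bar g}=0$ and $\mathcal B_{\bar g}1=h$) against the positive first eigenfunction of \eqref{eing2}, using self-adjointness and the conformal/diffeomorphism invariance of the sign of $\sigma_1$. Stage 1 of your sufficiency (normalizing to $g_0\in[g]$ with $R_{g_0}=0$ and $H_{g_0}$ a constant of the sign of $\sigma_1$, via Escobar) also matches the paper.

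The gap is in Stage 2. Your plan is to use the diffeomorphism only to give $h\circ\psi^{-1}$ a ``favorable pointwise sign'' and then to solve the resulting nonlinear Steklov problem directly by variational or sub/supersolution methods. This does not close, for two reasons. First, in case (c) the boundary term carries the critical trace exponent $n/(n-2)$, and there are genuine Kazdan--Warner/Escobar obstructions (Section \ref{case_3} of the paper): on $B^n$ an everywhere-positive function of the form $Ax+B$, $A\neq 0$, is \emph{not} attainable in the pointwise conformal class, so ``favorable sign'' of $h\circ\psi^{-1}$ cannot suffice as a solvability criterion; a sign-reshaping transplantation lemma controls the wrong quantity. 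Second, in case (b) you explicitly leave the integral compatibility unresolved, i.e.\ the hardest step is deferred rather than carried out. The paper's mechanism is different and is the missing idea: after rescaling $h$ by a positive constant so that $\min h< H_0<\max h$, the Approximation Lemma \ref{appr} says the $W^{1/2,p}$-closure of the diffeomorphism orbit of $h$ contains \emph{every} function with range in $[\min h,\max h]$ --- in particular functions arbitrarily close to the (perturbed) constant data $Q_2(u_1)$, where $u_1$ is produced by the Perturbation Theorem \ref{perttheo} so that $F_2'(u_1)$ is invertible. One then solves $F_2(u)=(0,h\circ\varphi)$ by the Inverse Function Theorem (Lemma \ref{help}). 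The diffeomorphism is thus used to place $h$ inside the small ball where the IFT applies, not to fix its sign profile; this is what simultaneously handles all three cases, including the degenerate kernel in case (b), without any separate variational or degree argument.
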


We observe that similar results can be achieved for $\lambda_1(\mathcal L_g)$.
\begin{theorem}\label{teoc}
Let $(M^n,g)$ be a compact Riemannian manifold with nonempty boundary and dimension  $n\geq 3$.
\begin{itemize}
    \item[(a)] If $\lambda_1(\mathcal L_g)<0$, then CE$(g)$ is the set of smooth functions on $M$ that are negative somewhere.
   \item[(b)] If $\lambda_1(\mathcal L_g)=0$, then CE$(g)$ is the set of smooth functions on $M$ that either change sign or are identically zero on $\partial M$.
    \item[(c)] If $\lambda_1(\mathcal L_g)>0$, then CE$(g)$ is the set of smooth functions on $M$ that are positive somewhere.
\end{itemize}
\end{theorem}

We highlight that the strategy for Theorems \ref{teob} and \ref{teoc} is completely analogous to the case of dimension $n=2.$ However, instead of using the Uniformization Theorem, we utilize the properties of the eigenvalues of $(\mathcal L_g,\mathcal B_g).$

We also present results related to pointwise conformal change. More precisely, given $H \in C^\infty(\partial M)$ with $\int_{\partial M} H da < 0$ (resp. $R \in C^\infty(M)$ with $\int_M R dv < 0$), if $\lambda_1(\mathcal{L}_g) < 0$, we establish a sufficient condition to ensure that $H \in PC^0(g)$ (resp. $R \in PC(g)$). Refer to Theorem \ref{prop002} for details. This pointwise conformal setting has been extensively studied in the last few decades. For a comprehensive overview of the problem, consult \cite[Section 1.3]{phdthesis} and the references therein.

Let us conclude this introduction with a brief description of the structure of this work.  In Section \ref{perturbation}, we present a technical perturbation result that is useful to guarantee the invertibility of a certain operator related to the curvature. In Section  \ref{sec3} we prove  Theorem \ref{teoa} and  address the problem of prescribing the curvatures of surfaces with boundary in a conformal class,  depending on the sign of the Euler characteristic. We also study a higher-order analogue in Section \ref{sec002}, proving Theorems \ref{teob} and \ref{teoc}, and establishing some existence and nonexistence of conformal metrics depending on the sign of the lowest eigenvalues of equations \eqref{eing1} and \eqref{eing2}.

\noindent
{\bf Acknowledgements}.
The authors would like to thank R. L\'opez-Soriano  for theinterest in this work and to help them to improve an earlier version of this paper, as well as to L. Mari for indicating some references and related discussions. Finally, the authors thank the anonymous referee for the valuable suggestions and comments.

\section{A perturbation result}\label{perturbation}

Throughout this paper, we let $(M^n,g)$ be an $n$-dimensional Riemannian   manifold with nonempty boundary $\partial M$. With respect to this metric, the
volume  and area element will be denoted  by $dv$ and $da,$ respectively. If $n=2$ the Gaussian and the geodesic curvature will be denoted by $K_g$ and $\kappa_g$, respectively. While, if $n\geq 3$, the scalar and the mean curvature will be denoted by $R_g$ and $H_g$, respectively.   Given a function $f\in C^{\infty}(M),$  the Laplacian with respect to $g$ will be denoted by $\Delta_g f=\mbox{div}\nabla_g f$.

For $n=2,$ if $\overline g$ is a pointwise conformal metric to $g$  with Gaussian and geodesic curvatures equal to $K$ and $\kappa$, respectively, then we can write $\overline g=e^{2u}g$, and the relation between the Gaussian and geodesic curvature with respect to $g$ and $\overline{g}$ is   given by
\begin{equation}\label{confchange_n=2}
\begin{cases}\mathcal L_gu\equiv-\Delta_g u+K_{g} =Ke^{2u} & \text { in } M, \\ \mathcal B_gu\equiv\dfrac{\partial u}{\partial \nu_{g}}+\kappa_{g} =\kappa e^{u} & \text { on } \partial M.\end{cases}
\end{equation}
Define
\begin{equation}\label{TQ1}
T_1(u):=e^{-2u}\mathcal{L}_gu\quad\mbox{ and }\quad Q_1(u):=e^{-u}\mathcal{B}_gu.
\end{equation}
For $n\geq 3$, if $\overline g$ is a pointwise conformal metric to $g$  with  scalar and mean curvatures equal to $R$ and $H$, respectively, then there exists a smooth function $u>0$ such that  $\overline g=u^{\frac{4}{n-2}}g$. The relation between the scalar and mean curvature with respect to $g$ and $\overline{g}$ is  given by  
\begin{equation}\label{confchange}
\begin{cases}\mathcal L_gu\equiv-\frac{4(n-1)}{n-2}\Delta_g u+R_{g} u=Ru^{\frac{n+2}{n-2}} & \text { in } M, \\ 
\mathcal B_gu\equiv\frac{2}{n-2} \dfrac{\partial u}{\partial \nu_g}+H_{g} u=Hu^{\frac{n}{n-2}} & \text { on } \partial M.\end{cases}
\end{equation}
We also define
\begin{equation}\label{TQ2}
T_2(u):=u^{-a}\mathcal{L}_gu  \quad\mbox{ and }\quad  
Q_2(u):=u^{-b}\mathcal{B}_gu,
\end{equation}
where $a=(n+2)/(n-2)$ and $b=n/(n-2)$.
 Finally,  set
\begin{equation}\label{eq019}
    F_i(u)=(T_i(u),Q_i(u)),\quad i=1,2.
\end{equation}

In order to solve the equation $F_i(u)=(f,h)$ for functions $f$ and $h$, we will consider the linearization of $F_i$ with respect to $u$, which is given by
 $
 F_i'(u) v=(T_i'(u)v,Q_i'(u)v),
 $
where
\begin{equation*}
\quad \begin{cases} T'_1(u)v=-e^{-2 u}[\Delta_g v-2(\Delta_g u-K_g) v]=-e^{-2 u} A_{1}(u) v, \\ 
Q'_1(u)v=-e^{-u}\left(-\dfrac{\partial v}{\partial \nu_g}+\left(\dfrac{\partial u}{\partial \nu_g}+\kappa_g\right)v\right)=- e^{-u}B_1(u)v,\end{cases}
\end{equation*}
for $n=2$, and for $n\geq 3$ and $u>0$ we have
\begin{equation*}
\begin{cases} T_{2}^{\prime}(u) v=-\alpha u^{-a}\left(\Delta_g v-\left(a \dfrac{\Delta_g u}{u}+(a-1) \dfrac{R_g}{\alpha}\right) v\right)=-\alpha u^{-a} A_{2}(u) v, \\ 
Q'_2(u)v=-\beta u^{-b}\left(-\dfrac{\partial v}{\partial \nu_g}+\left(\dfrac{b}{u}\dfrac{\partial u}{\partial \nu_g}+\dfrac{b-1}{\beta}H_g\right)v\right)= -\beta u^{-c}B_2(u)v,\end{cases}
\end{equation*}
where $\beta= 2/(n-2)$ and $\alpha=2(n-1)\beta$.
 It is standard to check that each pair $(A_i(u),B_i(u))$, $i=1,2$, is a  self-adjoint linear elliptic  operator satisfying  $\mbox{ker}\:(A_i(u),B_i(u))=\mbox{ker } F'_i(u).$
Before we state our next lemma we observe that if $v\in \mbox{ker}\:(A_i(u),B_i(u))$ is a nontrivial function, then $v$ is a solution of the problem
$$\left\{\begin{array}{rcll}
A_i(u)v & = & 0\cdot v &\mbox{ in }M \\
    B_i(u)v & = & 0 & \mbox{ on }\partial M,
\end{array}\right.$$
 which is the well-known eigenvalue problem under Robin boundary condition. The first eigenvalue has unit multiplicity and the associated eigenfunction can be taken to be everywhere positive on $M$ (See \cite[Appendix A]{CL}). This implies that $dim\mbox{ ker}\:(A_i(u),B_i(u))=1$.
Now we state the following Perturbation lemma.

\begin{lemma}[Perturbation Lemma]\label{pert}
Let $(M^n,g)$ be a compact Riemannian manifold with nonempty boundary and dimension  $n\geq 2$. Suppose that 
  $$F_i'(u):W^{2,p}(M)\to L^{p}(M)\oplus W^{\frac{1}{2},p}(\partial M)$$  is not invertible. Then there exists a smooth function $z$ such that 
$$
dim\mbox{ ker } F_i'(u+tz)=0,
$$
for $|t|>0$ sufficiently small.
\end{lemma}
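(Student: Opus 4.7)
The non-invertibility of $F_i'(u)$, combined with the observation immediately preceding the statement, means that $0$ is the principal eigenvalue of the self-adjoint Robin-type boundary problem associated to $(A_i(u),B_i(u))$, with a positive eigenfunction $\varphi$ spanning the kernel. The plan is to move this principal eigenvalue off $0$ by a first-order perturbation of $u$ along a smooth direction $z$: since the eigenvalue is simple, Kato--Rellich perturbation theory guarantees that $\lambda_1(t):=\lambda_1(A_i(u+tz),B_i(u+tz))$ and its eigenfunction depend smoothly on $t$, and once I exhibit a $z$ with $\lambda_1'(0)\neq 0$, continuity of the rest of the spectrum keeps $0$ out of the spectrum for $0<|t|\ll 1$, which is equivalent to $\dim\ker F_i'(u+tz)=0$.

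The first concrete step is the first variation formula. Writing $\dot A_i=\frac{d}{dt}\big|_{t=0}A_i(u+tz)$ and $\dot B_i=\frac{d}{dt}\big|_{t=0}B_i(u+tz)$, the explicit expressions in the introduction show that $\dot A_i$ is multiplication by an expression linear in $z$ and $\Delta_g z$, while $\dot B_i$ is multiplication on $\partial M$ by an expression linear in $z$ and $\partial_\nu z$. Differentiating the eigenequation $A_i(u+tz)\varphi_t=\lambda_1(t)\varphi_t$, $B_i(u+tz)\varphi_t=0$ at $t=0$, pairing with $\varphi$, integrating by parts, and using $A_i(u)\varphi=0=B_i(u)\varphi$ to eliminate the terms containing $\dot\varphi$, I arrive at the formula
\[
\lambda_1'(0)\int_M \varphi^2\, dv_g \;=\; \int_M(\dot A_i\varphi)\varphi\, dv_g \;+\; \int_{\partial M}(\dot B_i\varphi)\varphi\, da_g.
\]

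The heart of the argument is to show that the linear functional $L\colon z\mapsto \lambda_1'(0)$ is not identically zero on $C^\infty(M)$. Assume for contradiction that it is. A further integration by parts on the right-hand side, shifting all derivatives from $z$ onto $\varphi^2$, yields an identity of the schematic form
\[
\int_M z\, P(\varphi^2)\, dv_g \;+\; \int_{\partial M}\bigl(z\, Q(\varphi^2) + (\partial_\nu z)\, R(\varphi^2)\bigr)\,da_g \;=\; 0 \quad \text{for all } z\in C^\infty(M),
\]
with $P,Q,R$ explicit linear expressions. Taking $z$ compactly supported in the interior of $M$ forces $P(\varphi^2)\equiv 0$; then prescribing $z=0$ on $\partial M$ with arbitrary normal derivative forces $R(\varphi^2)\equiv 0$ on $\partial M$. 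A direct computation shows that in both regimes $i=1$ ($n=2$) and $i=2$ ($n\geq 3$) the quantity $R(\varphi^2)$ is a nonzero multiple of $\varphi^2$, so that $\varphi|_{\partial M}\equiv 0$, contradicting the strict positivity of $\varphi$.

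With $L\not\equiv 0$, fix any $z$ with $L(z)\neq 0$. Then $\lambda_1(t)=tL(z)+o(t)\neq 0$ for $0<|t|\ll 1$, while the higher eigenvalues $\lambda_k(t)$, $k\geq 2$, depend continuously on $t$ and are bounded away from $0$ at $t=0$, hence remain nonzero for small $|t|$. Therefore $0\notin\mathrm{spec}(A_i(u+tz),B_i(u+tz))$, and $\dim\ker F_i'(u+tz)=0$. The main technical hurdle is the careful bookkeeping of the boundary contributions in the first variation formula, so that after integration by parts one can isolate a term of the form $(\partial_\nu z)\,\varphi^2$ on $\partial M$; this is what allows the contradiction to leverage positivity of $\varphi$ at the boundary, which is the only available extra information beyond what is encoded in the interior equation.
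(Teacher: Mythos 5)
Your proposal is correct and follows essentially the same route as the paper: the first variation formula for the simple zero eigenvalue, integration by parts to shift derivatives from $z$ onto $\varphi^2$, and the decisive observation that the boundary coefficient of $\partial_\nu z$ is a nonzero multiple of $\varphi^2$ (using $a\neq b$ when $n\geq 3$), so that the positive eigenfunction cannot vanish on $\partial M$. The only difference is organizational — you run the argument by contradiction on the functional $z\mapsto\lambda_1'(0)$, whereas the paper splits into cases and exhibits an explicit $z$ (supported in the interior when the interior term is nonzero, and with prescribed normal derivative otherwise) — but the underlying computations coincide.
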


\begin{proof}
Suppose that $dim\mbox{ ker }T_i'(u)=1$. Given a smooth function $z$, for each $i=1$ and $2$, there exist normalized functions $v_i(t)$ and numbers $\lambda_i(t)$, such that $v_i(0)$ is a nonzero function on $\mbox{ker }T_i'(u)$, $\lambda_i(0)=0$ and
\begin{equation}\label{eq013}
    \left\{\begin{array}{rcll}
     A_i(u+tz)v_i(t) & = & \lambda_i(t) v_i(t) & \mbox{ in } M \\
     B_i(u+tz)v_i(t) & = & 0 & \mbox{ on } \partial M.
\end{array}\right.
\end{equation}

By \cite[Lemma A.1]{MR3406373} (see also \cite[Lemma A.2]{CL}) the maps $t\mapsto \lambda_i(t)$ and $t\mapsto v_i(t)$ are smooth. We will use the following notation 
$$A_i'(\varphi)=\left.\frac{d}{dt}\right|_{t=0}A_i(u+tz)\varphi\;\;\;\mbox{ and }\;\;\;B_i'(\varphi)=\left.\frac{d^2}{dt^2}\right|_{t=0}B_i(u+tz)\varphi.$$
where $\varphi$ is any smooth function. 
Since $\lambda_i(0)=0$, taking the derivatives of \eqref{eq013} at $t=0$, we get
\begin{equation}\label{eq014}
    A_i'(v_i)+A_i(v_i')  =\lambda_i'v_i \quad\mbox{ and }\quad B_i'(v_i)+B_i(v_i')  =0
\end{equation}

Since $\|v_i\|_{L^2(M)}=1$, then $\langle v_i,v_i' \rangle=0$. At $t=0$ we have $A_i(v_i)=0$ and $B_i(v_i)=0$. Since the pair $(A_i(u),B_i(u))$ is self-adjoint we get 
$\langle v_i,A_i(v_i')\rangle+\langle v_i,B_i(v_i')\rangle=0.$
Therefore, by \eqref{eq014} we obtain
\begin{equation}\label{eq018}
    \lambda_i'  =  \langle A_i'(v_i),v_i \rangle+\langle A_i(v_i'),v_i\rangle=\langle A_i'(v_i),v_i \rangle+\langle B_i'(v_i),v_i \rangle.
\end{equation}

Now we have two cases. 

%\medskip

\noindent{\bf Case 1:} $i=1$. In this case, for any smooth function $\varphi$ we have $A_1'(u)\varphi=-2\varphi\Delta_g z$ and $B_1'(u)\varphi=\frac{\partial z}{\partial\nu_g}\varphi$.
Thus, by \eqref{eq018} at $t=0$  we get
\begin{equation}\label{eq010}
    \lambda_1'=-2\langle \Delta_g z,v_1^2\rangle+\left\langle \frac{\partial z}{\partial\nu_g},v_1^2 \right\rangle.
\end{equation}

Thus, if $v_1(0)$ is not constant we can choose $z=\varphi\Delta_g v_1(0)^2$, where $\varphi$ is a nonnegative smooth function which vanishes in a neighborhood of $\partial M$. This implies that $\lambda_1'=-2\langle \varphi,(\Delta_g v_1(0)^2)^2 \rangle<0$.

Now, suppose $v_1(0)$ is constant. Using integration by parts \eqref{eq010} we get 
$$\lambda_1'=-v_1(0)^2\int_{\partial M}\frac{\partial z}{\partial\nu_g}.$$
It is not difficult to prove the existence of a function $z$ with $\partial z/\partial\nu_g>0$.

\medskip

\noindent{\bf Case 2:} $i=2$. In this case $u>0$ and for any smooth function $\varphi$ we have $A_2'(u)\varphi=-\frac{a\varphi}{u}W(z)$ and $B_2'(u)\varphi=-\frac{b\varphi}{u}V(z),$ 
where $W(z)=\Delta_g z-\frac{\Delta_g u}{u}z$ and $V(z)=\frac{z}{u}\frac{\partial u}{\partial\nu}-\frac{\partial z}{\partial\nu}$.
By \eqref{eq018} at $t=0$ we have
\begin{equation}\label{eq012}
    \lambda_2'=-a\left\langle \frac{v_2^2}{u},W(z) \right\rangle-b\left\langle \frac{v_2^2}{u},V(z) \right\rangle.
\end{equation}

Similar to the case 1, if $W(v_2^2/u)$ is not identically zero, we consider $z=\varphi W(v_2^2/u)$, where $\varphi$ is a nonnegative smooth function which vanishes in a neighborhood of $\partial M$. Using integration by parts we get $\lambda_2'=-a\langle \varphi,W(v_2^2/u)^2\rangle<0$.
If $W(v_2^2/u)\equiv 0$, then by \eqref{eq012} we obtain
$$\lambda'_2=-\int_{\partial M}\left((a-b)\frac{v_2^2}{u}\frac{\partial z}{\partial\nu_g}+z\left(b\frac{v_2^2}{u^2}\frac{\partial u}{\partial\nu_g}-a\frac{\partial}{\partial\nu_g}\left(\frac{v_2^2}{u}\right)\right)\right),$$
with $a-b\not=0$. It is not difficult to find a smooth function $z$ such that $z=0$ on $\partial M$ and $\frac{\partial z}{\partial\nu}<0$ (for instance, a smooth function that coincides in a neighborhood of the boundary with the distance function of a point to the boundary $\partial M$). 
\end{proof}

\begin{theorem}[Perturbation Theorem]\label{perttheo} 
Let $(M^n,g)$ be a Riemannian manifold with nonempty boundary of dimension $n\geq 2$. Then, the operator $$F'_i(u):W^{2,p}(M)\to L^{p}(M)\oplus W^{\frac{1}{2},p}(\partial M)$$  is invertible on an open dense set of functions $u \in C^{2}(M)$, where we assume that $u$ is positive  if $n\geq 3$.
\end{theorem}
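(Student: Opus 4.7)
The plan is to prove that the set $\mathcal{U}_i := \{u\in C^2(M): F'_i(u) \text{ is invertible}\}$ (intersected with $\{u>0\}$ when $n\geq 3$) is both open and dense in $C^2(M)$. The starting point is to view $F'_i(u)$ as the composition of a pointwise positive multiplier ($e^{-2u}$ or $u^{-a}$, etc.) with the self-adjoint elliptic pair $(A_i(u),B_i(u))$. By the standard theory of elliptic boundary value problems with Robin-type conditions,
\[
F'_i(u):W^{2,p}(M)\longrightarrow L^p(M)\oplus W^{1-1/p,p}(\partial M)
\]
is Fredholm of index zero for every admissible $u$, so invertibility is equivalent to triviality of the kernel, and this is the only property I will need to track.

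For openness, I would verify that $u\mapsto F'_i(u)$ is continuous as a map into the Banach space of bounded linear operators between the Sobolev spaces above. This amounts to checking that the coefficients of $(A_i(u),B_i(u))$ depend continuously on $u$, $\nabla u$, $\Delta_g u$ and $\partial u/\partial\nu_g$, all of which vary in $C^0$ when $u$ varies in $C^2$. Since the invertible bounded operators form a norm-open subset, and since for $n\geq 3$ the strict positivity of $u$ on compact $M$ is stable under $C^2$-small perturbations, $\mathcal{U}_i$ is open in the prescribed topology.

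For density, I would fix $u\in C^2(M)\setminus\mathcal{U}_i$. The remark preceding Lemma \ref{pert} gives $\dim\ker F'_i(u)=1$: any nontrivial element of $\ker(A_i(u),B_i(u))$ is an eigenfunction at eigenvalue zero of a self-adjoint Robin problem whose first eigenvalue is simple with positive eigenfunction, forcing multiplicity one. Applying Lemma \ref{pert} then produces a smooth $z$ such that $\dim\ker F'_i(u+tz)=0$ for all sufficiently small $|t|>0$. Combined with the Fredholm index zero property, $F'_i(u+tz)$ is therefore invertible, i.e.\ $u+tz\in\mathcal{U}_i$, and $u+tz\to u$ in $C^2$ as $t\to 0$, which gives density.

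The analytic heart of the statement---producing a specific direction $z$ which moves the simple zero eigenvalue off zero via the Hadamard-type identity for $\lambda'_i(0)$---has already been handled in Lemma \ref{pert}, so I expect no substantive new difficulty beyond that. The only mild technical point is matching the $C^2$ regularity of $u$ with the desired continuity of the operator-valued map $u\mapsto F'_i(u)$, which is comfortable because the lower-order coefficients require only $C^0$ continuity in $u$ and its first derivatives. Collecting openness and density then completes the proof.
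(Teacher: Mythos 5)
Your proposal is correct and follows essentially the same route as the paper: openness via continuity of $u\mapsto F_i'(u)$ in operator norm, the equivalence of invertibility with triviality of the kernel (the paper cites Theorem 2.20 of Gazzola--Grunau--Sweers for this Fredholm-alternative step), and density via Lemma \ref{pert} applied to the one-dimensional kernel. The only extra care you add — stability of positivity of $u$ for $n\geq 3$ and the explicit use of the simplicity of the zero eigenvalue — is consistent with, and slightly more detailed than, the paper's two-line argument.
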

\begin{proof}
Since $F_i'(u)$ depends continuously on $u$ (See \cite[Appendix A]{CL}),  the openness assertion follows. By Theorem 2.20 in \cite{Gazzola_Grunau_Sweers_2010}, the operator $F'(u)$ is invertible if and only if $\mbox{ker }F'_i(u)=\mbox{ker }(A_i(u),B_i(u))=0$. The density condition follows by Lemma \ref{pert}.
\end{proof}

Consider a surface with boundary endowed with a flat metric and geodesic boundary. Notice  that $F'_1(0)\cdot v=(-\Delta_g v,\partial v/\partial \nu_g)$ is an example of non-invertible  operator, since its kernel is composed by constants. However,  Perturbation Theorem \ref{perttheo} is useful to guarantees that for any $\varepsilon>0$ there is a smooth function $u_0$ so close to zero that $F'_1(u_0)$ is invertible and $\|F(u_0)\|_{\infty}\leq \varepsilon.$ In that regard, we highlight that  in some situations this Perturbation Theorem is not needed, as for example,  for a surface $M$ with negative Gaussian curvature and geodesic boundary,  since $F'_1(u_0)$ is already invertible.

\section{Curvature functions for surfaces with boundary}\label{sec3}

The main goal of this section is to study the   prescribed  Gaussian and geodesic curvature problem in surfaces with boundary from  a conformal  viewpoint. The focus of our results concerns the case in which at least one of the curvatures is zero. First we will prove  Theorem \ref{teoa}, and then we will address the problem of prescribing the curvatures in a conformal class. 

First we  describe the $L^p$ and the $W^{1/2,p}$ closure of the orbit of functions under the group of diffeomorphisms of $M$.

\begin{lemma}[Approximation Lemma \cite{CV}]\label{appr} Let $(M^n,g)$ be a Riemannian \linebreak manifold with nonempty boundary of dimension $n\geq 2$. 
\begin{itemize}
\item[(a)] Let $f\in C^{\infty}(\partial M)\cap W^{\frac{1}{2},p}(\partial M).$ Given $h\in W^{\frac{1}{2},p}(\partial M),$ if  $\min f\leq h(x)\leq\max f $ on $\partial M$, then for all $\varepsilon>0$ there exists a diffeomorphism  $\varphi$ of $M$ such that, for $p>2n,$ we have  that
$$\|f\circ \varphi-h\|_ {W^{\frac{1}{2},p}(\partial M)}<\varepsilon.$$
\item[(b)] Let $f\in C^{\infty}( M)\cap L^p(M).$ Given $h\in L^p(M),$ if $\min f\leq h(x)\leq\max f $ on $ M$, then for all $\varepsilon>0$ there exists a diffeomorphism  $\varphi$ of $M$ such that, for $p>n,$ we have  that 
$$\|f\circ \varphi-h\|_ {L^{p}( M)}<\varepsilon.$$
\end{itemize} 
\end{lemma}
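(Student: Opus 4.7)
The plan is to carry out a Kazdan--Warner-style approximation by diffeomorphisms adapted to the boundary setting. For both parts the common scheme is: (i) reduce to the case where $h$ is a step function $h_\varepsilon = \sum_{j=1}^N c_j \chi_{U_j}$ whose values $c_j$ all lie in the range of $f$, by density of simple functions in the target norm; (ii) use the intermediate value theorem on the (connected) ambient manifold together with continuity of $f$ to find, for each level $c_j$, an open set $W_j$ on which $|f-c_j|<\delta$ for $\delta$ as small as desired; (iii) build a diffeomorphism $\varphi$ of $M$ that sends each $U_j$ into $W_j$ up to a set of arbitrarily small measure, via a Moser/Dacorogna--Moser-type volume-controlled rearrangement.

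For part (b), once the diffeomorphism is in hand the change-of-variables formula gives
\[
\|f\circ\varphi - h_\varepsilon\|_{L^p(M)}^p = \sum_j \int_{U_j}|f(\varphi(x))-c_j|^p\,dv \leq \delta^p|M| + \text{(error from the exceptional set)},
\]
and the triangle inequality against $\|h-h_\varepsilon\|_{L^p} < \varepsilon/2$ closes the argument. This is Kazdan--Warner's classical $L^p$-approximation argument, with no essential modification due to the presence of a boundary since only connectedness and continuity of $f$ are used.

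For part (a) I first reduce to an intrinsic statement on $\partial M$: every diffeomorphism of $\partial M$ extends to a diffeomorphism of $M$ via a collar neighbourhood together with a cutoff, so it suffices to work on $\partial M$. The new difficulty is controlling the fractional Sobolev norm $W^{1/2,p}(\partial M)$ rather than just $L^p$. My plan is to first approximate $h$ in $W^{1/2,p}$ by a smooth function $\tilde h$ still satisfying $\min f \leq \tilde h \leq \max f$ (using density of $C^\infty$ together with a Lipschitz truncation, which preserves $W^{1/2,p}$-regularity); then apply the $C^0$-version of (i)--(iii) to $\tilde h$ with a diffeomorphism $\psi$ of controlled $C^1$-norm; and finally interpolate via $W^{1/2,p}(\partial M) \hookleftarrow (L^p,W^{1,p})_{1/2,p}$, so that uniform smallness of $f\circ\psi-\tilde h$ combined with a uniform $W^{1,p}$-bound (guaranteed by the $C^1$-bound on $\psi$ and smoothness of $f,\tilde h$) produces the required $W^{1/2,p}$-smallness. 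The hypothesis $p>2n$ comfortably ensures the Sobolev embedding $W^{1/2,p}(\partial M)\hookrightarrow C^{0,\alpha}(\partial M)$, simplifying the interpolation bookkeeping.

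The main obstacle will be step (iii) together with the $C^1$-control needed for part (a): constructing a diffeomorphism that realizes a prescribed measure-rearrangement while keeping its derivatives uniformly bounded, when the target partition is forced to be fine. A direct appeal to Moser's theorem provides bounded but not uniformly controlled derivatives as the partition is refined; the remedy I would pursue is an iterative refinement balancing measure error against derivative growth, along the lines of Dacorogna--Moser with sharp estimates, carefully tracking that the $C^1$-bound on $\psi$ does not blow up faster than the fineness of the partition demands.
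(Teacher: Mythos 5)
The paper never proves this lemma; it is quoted from \cite{CV}, so there is no internal argument to compare against, and your proposal has to stand on its own. Part (b) does: it is the classical Kazdan--Warner $L^p$ rearrangement argument, and the one detail you leave implicit is easily supplied --- on the exceptional set both $f\circ\varphi$ and the step values $c_j$ lie in $[\min f,\max f]$, so that set contributes at most $(\max f-\min f)\,|E|^{1/p}$, which is closed by shrinking $|E|$. The reduction of part (a) to $\partial M$ via a collar extension and the truncated smooth approximation of $h$ are also fine.

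The gap is in the final step of part (a), precisely where you locate "the main obstacle," and it is structural rather than a bookkeeping issue. The interpolation $\|u\|_{W^{1/2,p}}\lesssim\|u\|_{L^p}^{1/2}\|u\|_{W^{1,p}}^{1/2}$ needs a uniform $W^{1,p}$ bound on $f\circ\psi-\tilde h$, and no rearranging diffeomorphism can provide one: if $\psi$ sends all but measure $\eta$ of each $U_j$ into the set $\{|f-c_j|<\delta\}$, then $\psi$ must map a set of measure $O(\eta)$ onto the complement of $\bigcup_j\{|f-c_j|<\delta\}$, a set of fixed positive measure $L$; since $|\det D\psi|\le |D\psi|^{m}$ with $m=\dim\partial M=n-1$, Jensen's inequality gives $\int|D\psi|^p\gtrsim \eta\,(L/\eta)^{p/m}$, so $\|f\circ\psi\|_{W^{1,p}}\gtrsim \eta^{1/p-1/m}$ wherever $\nabla f\neq 0$ on the stretched region. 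The interpolated quantity then scales like $(\delta+\eta^{1/p})^{1/2}\eta^{(1/p-1/m)/2}$, and since $2/p<1/n<1/(n-1)$ for $p>2n$, its $p$-th power behaves like $\eta^{2/p-1/m}\to\infty$: no choice of $\delta,\eta$ makes it small, so no "iterative refinement" can rescue the interpolation route. One also cannot bypass $W^{1,p}$ by uniform smallness off a small set, because the Gagliardo seminorm $\iint|u(x)-u(y)|^p|x-y|^{-m-p/2}\,dx\,dy$ is not controlled by $\|u\|_{L^\infty}$ together with small support (the kernel is non-integrable on the diagonal). A correct proof has to estimate the fractional seminorm of $f\circ\varphi-h$ directly from the explicit construction of $\varphi$, exploiting that only half a derivative is being measured; as written, your part (a) does not reach the stated conclusion.
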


Another  fundamental tool is the Inverse Function Theorem for Banach Spaces (see, for instance, \cite{MR1864986}). Since the operator $$F_i(u):W^{2,p}(M)\to L^{p}(M)\oplus W^{\frac{1}{2},p}(\partial M),\quad i=1,2,$$
is a $C^1$ map, for $p>2n$, if 
$F'_{i}(u)=(T_i'(u),Q_i'(u))$ is invertible, then there is $\delta>0$ such that given $(f,h)\in C^{\infty}(M)\times C^{\infty}(\partial M)$ with
$$
\|(f,g)-F_i(u))\|_{L^p\oplus W^{\frac{1}{2},p}}<\delta,
$$
there exists $v\in C^{\infty}(M)$ such that $F(v)=(f,h).$

\subsection{Proof of Theorem \ref{teoa}}\label{proof-theorem-1-2}

\begin{proof}[Proof of Theorem \ref{teoa}]
We establish the sufficiency condition for item (a) as the necessity of \eqref{C1} is evident. Let us assume that $\kappa$ satisfies \eqref{C1}. Due to the celebrated Osgood, Phillips, and Sarnak uniformization theorem for surfaces with boundaries \cite{OPS} (also referenced in Brendle \cite{B1,B2}), we can assume that $g$ is a Gauss flat metric with a constant geodesic curvature equal to $c$. In this scenario, we have $F_1(0)=(T_1(0),Q_1(0))=(0,c)$, where $F_1$ is defined in \eqref{eq019}. This trivially implies the result if $\kappa$ is constant.

Now, let us assume that $\kappa$ is not constant. First, suppose that
\begin{equation}\label{ineqn_2}
 \min \kappa <c<\max \kappa.
 \end{equation}

The Perturbation Theorem \ref{perttheo} shows that given any $\varepsilon>0$, there exists a smooth function $u_1$ sufficiently close to $u_0=0$ such that
\begin{equation*}\label{varepsilon}
    \|F_1(0)-F_1(u_1)\|_{\infty}=\|(0,c)-F_1(u_1)\|_{\infty}<\varepsilon,
\end{equation*} 
and $F_{1}'\left(u_{1}\right)$ is invertible. By \eqref{ineqn_2}, for $\varepsilon>0$ small enough we have
$$
\min \kappa<Q_{1}\left(u_{1}\right)<\max \kappa.
$$
If $p>4$, the Inverse Function Theorem \cite{MR1864986} implies that there exists $\delta>0$ such that if $(f,h)\in C^\infty(M)\times C^\infty(\partial M)$ and $\|(f,h)-F_1(u_1)\|_{L^p\oplus W^{\frac{1}{2},p}}<\delta$ then there is $u\in C^\infty(M)$ such that $F_1(u)=(f,h)$. We can take $u$ smooth because the operator is elliptic.

By Lemma \ref{appr} there is a diffeomorphism $\varphi$ of $M$ such that 
$$\left\|\kappa \circ \varphi-Q_{1}\left(u_{1}\right)\right\|_{W^{1/2,p}}<\delta/2.$$ 
Thus, for $\varepsilon<\delta/2$ we have $\|(0,\kappa\circ\varphi)-F_1(u_1)\|_{L^p\oplus W^{\frac{1}{2},p}}<\delta$.
Therefore, we can find a smooth function $u$ such that $F_{1}(u)=(0,\kappa \circ \varphi)$.

Finally, suppose that \eqref{ineqn_2} does not hold. In this case we use \eqref{C1} to conclude that $\kappa$ and $c$ have the same sign at some point of $M$. Thus, there is a constant $m>0$ such that the function $m\kappa$ satisfies \eqref{ineqn_2}. By the previous case, there exists a smooth function $u$ and a diffeomorphism $\varphi$ such that $F_1(u)=(0,m\kappa\circ\varphi)$. Consequently, if we define $v = u + \log m$, then $F_1(v) = (0, \kappa \circ \varphi)$.

Item (b) is similar since  Osgood, Phillips, and Sarnak uniformization theorem also provides a conformal metric of constant Gaussian curvature and geodesic boundary.
\end{proof}

\subsection{Pointwise conformal metrics  in surfaces with boundary}\label{pointn=2}
%Let $(M,g)$  be a compact Riemmanian surface with nonempty boundary. 

Contrary to the case of conformally equivalent surfaces, we will divide the pointwise conformal analysis for surfaces with boundary into three parts based on the sign of the Euler characteristic: $\chi(M)=0$, $\chi(M)<0$, and $\chi(M)>0$. In this context, we are extending the results for closed manifolds from \cite{KW1} to manifolds with boundaries.

\subsubsection{{\bf Case 1:} $\chi(M)=0$}

In this case, we will prove a sufficient and necessary condition for functions belonging to PC($g$) or PC$^0$($g$), see definitions (c) and (d) in the introduction. We start the analysis by providing the following result.

\begin{proposition}\label{propn=2}
Let $(M,g)$ be a compact Riemannian surface with nonempty boundary.
\begin{itemize}
    \item[(a)] Given $\kappa\in C^\infty(\partial M)$, with $\kappa\not\equiv 0$, there exists a  solution $u\in C^\infty(M)$ to the problem $\Delta_g u=0$ in $M$ with $ \partial u/\partial \nu_{g}=\kappa e^{u}$  on $\partial M$ if and only if, $\kappa$ changes sign and $\int_{\partial M}\kappa da<0$.
    \item[(b)] Given $K\in C^\infty(M)$, with $K\not\equiv 0$, there exists a  solution $u\in C^\infty(M)$ to the problem $-\Delta_g u=Ke^{2u}$ in $M$ with $ \partial u/\partial \nu_{g}=0$  on $\partial M$ if and only if, $K$ changes sign and $\int_{M}K dv<0$.
\end{itemize}
\end{proposition}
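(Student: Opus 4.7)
\textbf{Proof plan for Proposition \ref{propn=2}.}

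The necessity in both parts amounts to two integration-by-parts identities. For (a), suppose $u\in C^\infty(M)$ solves the system. Applying the divergence theorem to $\nabla u$ together with $\Delta_g u=0$ yields $\int_{\partial M}\kappa e^u\,da=0$, forcing $\kappa$ to change sign since $\kappa\not\equiv 0$. Writing $\kappa=e^{-u}\partial u/\partial \nu_g$ and applying the divergence theorem to $e^{-u}\nabla u$ gives
\[
\int_{\partial M}\kappa\,da \;=\; -\int_M e^{-u}|\nabla u|^2\,dv \;<\;0,
\]
with strict inequality because $u$ cannot be constant (else $\kappa\equiv 0$). For (b) the calculations are dual: $\int_M \Delta_g u\,dv=0$ together with the Neumann condition gives $\int_M Ke^{2u}\,dv=0$ (hence $K$ changes sign), while testing $-\Delta_g u=Ke^{2u}$ against $e^{-2u}$ yields $\int_M K\,dv=-2\int_M e^{-2u}|\nabla u|^2\,dv<0$.

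For the sufficiency in (a), my plan is a constrained variational argument. I would minimise
\[
I(u)\;=\;\tfrac12\int_M|\nabla u|^2\,dv
\]
over the class $\mathcal C=\bigl\{u\in H^1(M):\int_{\partial M}e^u\,da=1,\ \int_{\partial M}\kappa e^u\,da=0\bigr\}$, which is nonempty because $\kappa$ changes sign. The key analytic input is a trace Moser--Trudinger inequality of the form $\log\int_{\partial M}e^{u-\bar u}\,da\le C_1\int_M|\nabla u|^2\,dv+C_2$ on compact surfaces with boundary (cf.\ \cite{MR2178789,MR2274940}). Combined with the Jensen bound $\bar u\le -\log|\partial M|$ coming from the first constraint, this controls the boundary mean $\bar u$ along a minimising sequence in both directions and bounds the sequence in $H^1(M)$; elliptic regularity then upgrades the weak limit $u^*$ to a smooth minimiser. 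The Euler--Lagrange system of $u^*$ reads
\[
\Delta_g u^*=0 \text{ in }M,\qquad \frac{\partial u^*}{\partial\nu_g}=(\alpha+\beta\kappa)e^{u^*}\text{ on }\partial M,
\]
for Lagrange multipliers $\alpha,\beta$. Testing against the constant $1$ and using the two constraints forces $\alpha=0$; testing against $e^{-u^*}$ and reusing the necessity identity yields $\beta\int_{\partial M}\kappa\,da=-\int_M e^{-u^*}|\nabla u^*|^2\,dv<0$, and the hypothesis $\int_{\partial M}\kappa\,da<0$ then forces $\beta>0$. Replacing $u^*$ by $u^*-\log\beta$ delivers the required solution. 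Part (b) should be proved by the same template with bulk integrals in place of boundary ones: minimise $I$ over $\bigl\{u:\int_M e^{2u}\,dv=1,\ \int_M Ke^{2u}\,dv=0\bigr\}$, invoke the classical interior Moser--Trudinger inequality for coercivity, and identify the sign of the Lagrange multiplier by testing the Euler--Lagrange equation against $e^{-2u^*}$ to obtain $\beta\int_M K\,dv<0$.

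The main obstacle I anticipate is the loss of compactness of minimising sequences by concentration---on $\partial M$ for (a), at an interior point for (b); the hypothesis $\int\kappa<0$ (respectively $\int K<0$) together with the sign change is precisely the condition that should exclude the singular limiting profile concentrated at a point, but verifying this may demand a careful blow-up analysis of the type carried out in \cite{MR2274940}. Should that step become too delicate to keep self-contained, an equivalent route is a monotone iteration between an explicit subsolution (a very negative constant plus a harmonic correction with Neumann data built from the negative part of $\kappa$, respectively from $K$) and a matching supersolution; this bypasses Moser--Trudinger and uses only linear elliptic theory together with the maximum principle, at the price of somewhat more delicate constructions of the barriers.
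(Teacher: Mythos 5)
Your argument is essentially the paper's: the necessity identities are exactly those used there (divergence theorem for the sign change, testing against $e^{-u}$, resp.\ $e^{-2u}$, for the integral inequality, with strictness because a constant $u$ would force $\kappa\equiv 0$, resp.\ $K\equiv 0$), and the sufficiency is the same constrained minimisation of the Dirichlet energy with a Lagrange-multiplier identification of the sign via $\int_{\partial M}\kappa\,da<0$. The only genuine difference is the normalisation: the paper imposes $\int_M w\,dv=0$ together with the single constraint $\int_{\partial M}\kappa e^w\,da=0$, so coercivity follows from the plain Poincar\'e inequality and the second multiplier is killed by testing against $\varphi\equiv 1$; your choice $\int_{\partial M}e^u\,da=1$ forces you to invoke the trace Moser--Trudinger inequality to bound the boundary mean from below, which works but is a heavier tool than needed. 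Your anticipated ``main obstacle'' of concentration is not actually present here: you are minimising the Dirichlet energy itself, so the minimising sequence has bounded energy by definition, and the only compactness needed is convergence of $e^{u_k}$ to $e^{u^*}$ on $\partial M$ (resp.\ in $M$), which follows from the compact Orlicz-type (trace) embedding; no blow-up analysis is required, and the sign hypotheses enter only through nonemptiness of the constraint set and the sign of the multiplier. Two small corrections: you must note that the minimiser $u^*$ cannot be constant (otherwise the constraints would force $\int_{\partial M}\kappa\,da=0$), so that $\int_M e^{-u^*}|\nabla u^*|^2\,dv>0$ strictly and $\beta>0$; and the final normalisation should be $u^*+\log\beta$ rather than $u^*-\log\beta$, since $\partial_\nu(u^*+\log\beta)=\beta\kappa e^{u^*}=\kappa e^{u^*+\log\beta}$.
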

\begin{proof}
Let us prove item (a). Suppose $u\in C^\infty(M)$ is such that $\Delta_gu=0$ in $M$ and $\partial u/\partial\nu_g=\kappa e^u$ on $\partial M$. Using integration by parts we get
$$0=\int_M\Delta_gudv=\int_{\partial M}\frac{\partial u}{\partial\nu_g}da=\int_{\partial M}\kappa e^uda,$$
which implies that $\kappa$ changes sign. Since $u\not\equiv 0$, then
\begin{equation}\label{eq026}
    \int_{\partial M}\kappa da=\int_{\partial M}e^{-u}\frac{\partial u}{\partial\nu_g}da=\int_Me^{-u}\Delta_gudv-\int_Me^{-u}|\nabla_g u|^2dv<0.
\end{equation}

Now, suppose that $\kappa$ changes sign and $\int_{\partial M}\kappa da<0$. Define 
$$\mathcal D:=\left\{w\in W^{1,2}(M):\int_{\partial M}\kappa e^wda=0\quad\mbox{ and }\quad\int_Mwdv=0\right\}.$$
Using the hypothesis, it is not difficulty to show that $\mathcal D\not=\emptyset$ and $w\not\equiv 0$ on $\partial M$ for all $w\in\mathcal D$. Now define the functional $F:\mathcal D\rightarrow\mathbb R$ given by
$$F(w):=\int_M|\nabla_g w|^2dv\geq 0.$$
Let $a:=\displaystyle\inf_{w\in\mathcal D}F(w)$ and let $\{w_k\}$ be a minimizing sequence in $\mathcal D$. Note that
$$\|w_k\|^2_{W_{1,2}}=F(w_k)+\int_Mw_k^2dv.$$
By Poincaré-Sobolev inequality, there exists a constant $c>0$ such that for any $w\in W^{1,2}(M)$ with $\int_Mwdv=0$, it holds 
\begin{equation}\label{eq021}
    \int_Mw^2dv\leq c\int_M|\nabla_g w|^2dv,
\end{equation}
Thus
$$\|w_k\|^2_{W_{1,2}}\leq cF(w_k),$$
which implies that $\{w_k\}$ is bounded in $W^{1,2}(M)$. Since a ball in any Hilbert space is weakly compact, there exists a function $w\in W^{1,2}(M)$ and a subsequence of $\{w_k\}$, still denoted by $\{w_k\}$, converging weakly to $w$. Thus $\int_Mwdv=0$.

By \cite[Proposition 3.16]{phdthesis} we have that $e^{w_k}\rightarrow e^w$ in $L^2(\partial M)$. This implies that $\int_{\partial M}\kappa e^w=0$. Therefore $w\in \mathcal D$. Since $w_k\rightharpoonup w$ in $W^{1,2}(M)$, it is well-known that $\|w\|_{W^{1,2}}\leq\liminf\|w_k\|_{W^{1,2}}$. Since $W^{1,2}(M)$ is compactly embedding in $L^2(M)$, then $w_k\rightarrow w$ in $L^2(M)$. This implies that $F(w)\leq\liminf F(w_k)$. Therefore, $w$ minimizes $F$ in $\mathcal D$.

By Lagrange multiplier method we have the existence of constants $\lambda_1$ and $\lambda_2$ such that for all $\varphi\in W^{1,2}(M)$ it holds
\begin{equation}\label{eq020}
    \int_M(2\langle \nabla_g w,\nabla_g\varphi \rangle+\lambda_1\varphi)dv+\lambda_2\int_{\partial M}\kappa\varphi e^wda=0,
\end{equation}
which implies that $w$ is a weak solution of $\Delta_g w=\lambda_1$ in $M$ and $\partial w/\partial\nu_g=\lambda_2\kappa e^w$ on $\partial M$. By regularity one finds that $w$ is a smooth function. Choosing $\varphi\equiv 1$ in \eqref{eq020} we obtain $\lambda_1=0$, and choosing $\varphi=e^{-w}$ we get $\lambda_2<0$, since $\int_{\partial M}\kappa da<0$. 
This implies that the function $u:=w+\log(-\lambda_2)$ satisfies $\Delta_g u=0$ in $M$ and $\partial u/\partial\nu_g=\kappa e^u$ on $\partial M$.

The item (b) follows in an analogous way.  Use the same functional, but now defined in the space
$$\mathcal D:=\left\{w\in W^{1,2}(M):\int_{M}K e^wda=0\quad\mbox{ and }\quad\int_Mwdv=0\right\}.$$
\end{proof}

The main result in this case reads as follows.

\begin{theorem} \label{PCzerocarac}
Let $(M,g)$ be a compact Riemannian surface with boundary and $\chi(M)=0$. Let $\kappa\in C^\infty(\partial M)$ and $K\in C^\infty(M)$.
\begin{itemize}
    \item[(a)] Suppose $K_g=0$. $\kappa\in PC^0(g)$ if and only if either $\kappa\equiv 0$ or $\kappa$ changes sign and 
    $\int_{\partial M}\kappa e^{-v}da<0,$
    where   $v$ is such that $\Delta_gv=0$ in $M$ and  $\partial v/\partial \nu_{g}=\kappa_g$ on $\partial M$.
    \item[(b)] Suppose $\kappa_g=0$.  $K\in PC(g)$ if and only if either $K\equiv 0$ or $K$ changes sign and $\int_{M}K e^{2v}dv<0,$ where   $v$ is such that  $\Delta_g v=K_g$ in $M$ and $\partial v/\partial\nu_g=0$ on $\partial M$.
\end{itemize}
\end{theorem}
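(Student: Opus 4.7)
The plan is to reduce each statement to Proposition \ref{propn=2} by absorbing the ambient curvatures $K_g$ and $\kappa_g$ into a change of unknown. The hypothesis $\chi(M)=0$ enters only through Gauss--Bonnet, which guarantees that the auxiliary function $v$ exists in both cases.

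For part (a), where $K_g\equiv 0$, Gauss--Bonnet gives $\int_{\partial M}\kappa_g\,da=2\pi\chi(M)=0$, which is exactly the compatibility condition for the Neumann problem $\Delta_g v=0$ in $M$, $\partial v/\partial\nu_g=\kappa_g$ on $\partial M$; so $v\in C^\infty(M)$ exists. By \eqref{confchange_n=2}, saying $\kappa\in PC^0(g)$ amounts to finding $u\in C^\infty(M)$ with $\Delta_g u=0$ in $M$ and $\partial u/\partial\nu_g+\kappa_g=\kappa e^u$ on $\partial M$. Setting $u=w-v$, a direct computation shows this system is equivalent to
\[
\Delta_g w=0\ \text{in }M,\qquad \frac{\partial w}{\partial\nu_g}=\widetilde\kappa\, e^w\ \text{on }\partial M,\qquad \widetilde\kappa:=\kappa e^{-v}.
\]
Since $e^{-v}>0$, $\widetilde\kappa$ changes sign if and only if $\kappa$ does, and $\int_{\partial M}\widetilde\kappa\,da=\int_{\partial M}\kappa e^{-v}\,da$. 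Applying Proposition \ref{propn=2}(a) to $\widetilde\kappa$ then furnishes both directions of the equivalence in the case $\kappa\not\equiv 0$. The trivial case $\kappa\equiv 0$ is handled directly by taking $u=-v$, which satisfies $\Delta_g u=0$ and $\partial u/\partial\nu_g+\kappa_g=0$.

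For part (b), where $\kappa_g\equiv 0$, Gauss--Bonnet gives $\int_M K_g\,dv=0$, which is the compatibility condition for the Neumann problem $\Delta_g v=K_g$ in $M$, $\partial v/\partial\nu_g=0$ on $\partial M$; so $v$ exists. By \eqref{confchange_n=2}, $K\in PC(g)$ amounts to finding $u$ with $-\Delta_g u+K_g=Ke^{2u}$ in $M$ and $\partial u/\partial\nu_g=0$ on $\partial M$. The substitution $u=w+v$ reduces this to
\[
-\Delta_g w=\widetilde K\, e^{2w}\ \text{in }M,\qquad \frac{\partial w}{\partial\nu_g}=0\ \text{on }\partial M,\qquad \widetilde K:=Ke^{2v}.
\]
Since $e^{2v}>0$, $\widetilde K$ changes sign iff $K$ does, and $\int_M\widetilde K\,dv=\int_M Ke^{2v}\,dv$. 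Proposition \ref{propn=2}(b) applied to $\widetilde K$ yields the claim for $K\not\equiv 0$, while the trivial case $K\equiv 0$ is handled by taking $u=v$.

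The only substantive point is the existence of the auxiliary $v$ in each case, which is precisely where the hypothesis $\chi(M)=0$ is used through Gauss--Bonnet; the independence of the conclusion from the additive constant ambiguity in $v$ (it rescales $\widetilde\kappa$ or $\widetilde K$ by a positive constant, preserving both the sign-change condition and the sign of the integral) makes the reduction unambiguous. I do not foresee any genuine obstacle beyond bookkeeping: once $v$ is in hand, the argument is a clean translation into the hypotheses of Proposition \ref{propn=2}.
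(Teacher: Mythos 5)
Your proposal is correct and follows essentially the same route as the paper: the substitution $w=u+v$ (equivalently $u=w-v$, resp.\ $u=w+v$ in part (b)) absorbs $\kappa_g$ (resp.\ $K_g$) and reduces the statement to Proposition \ref{propn=2} applied to $\kappa e^{-v}$ (resp.\ $Ke^{2v}$), with Gauss--Bonnet supplying the solvability of the Neumann problem for $v$. The only cosmetic difference is that the paper re-derives the necessity of the integral inequality by the integration-by-parts computation of \eqref{eq026} rather than citing the ``only if'' half of Proposition \ref{propn=2}, which is the same computation.
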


\begin{proof}
Let us prove item (a). Using Gauss-Bonnet Theorem we find that $\int_{\partial M}\kappa_gda=0$. Let $v\in C^\infty(M)$ be the solution of the equation $\Delta_gv=0$ in $M$ and $\partial v/\partial\nu_g=\kappa_g$ on $\partial M$, which is given by \cite[Lemma 3.1]{phdthesis}.

Since $\kappa\in PC^0(g)$,  there exists $u\in C^\infty(M)$ such that
\begin{equation}\label{eq025}
\Delta_g u=0  \text { in } M\quad\mbox{ and }\quad \dfrac{\partial u}{\partial \nu_{g}}+\kappa_{g} =\kappa e^{u}  \text { on } \partial M.
\end{equation}
Define $w:=v+u$. Thus
\begin{equation}\label{eq024}
\Delta_g w=0 \text { in } M\quad\mbox{ and }\quad \dfrac{\partial w}{\partial \nu_{g}} =\kappa e^{-v}e^{w}  \text { on } \partial M.
\end{equation}
The inequality $\int_{\partial M}\kappa e^{-v}da<0,$ follows by a similar way as \eqref{eq026}.

Conversely, if $\kappa\equiv 0$, then the metric $e^{-2v}g$ belongs to $PC^0(g)$. If $\kappa$ changes sign and $\int_{\partial M}\kappa e^{-v}da<0$ holds, then by Proposition \ref{propn=2} the problem \eqref{eq024} has a solution $w$. Therefore, the function $u:=w-v$ satisfies \eqref{eq025}.

The item (b) follows in a similar manner.
\end{proof}

\subsubsection{{\bf Case 2:} $\chi(M)<0$}

 To begin with the study of this case, given constants $K_0$ and $\kappa_0$ consider the two following equations: 
\begin{equation}\label{confchangeK=0}
\Delta_g u =0 \text { in } M\quad\mbox{ and }\quad\dfrac{\partial u}{\partial \nu_{g}}+\kappa_0 =\kappa e^{u}  \text { on } \partial M
\end{equation}
and
\begin{equation}\label{confchangegeod=0}
-\Delta_g u+K_0 =Ke^{2u}  \text { in } M\quad\mbox{ and }\quad \dfrac{\partial u}{\partial \nu_{g}}=0  \text { on } \partial M.
\end{equation}

\begin{proposition}\label{solutionpointwisen=2}Let $(M,g)$  be a compact Riemannian surface with nonempty boundary.
\begin{itemize}
    \item[(a)] Suppose $\kappa_0<0$. If  \eqref{confchangeK=0} has a solution $u$ for some function $\kappa\in C^{\infty}(\partial M)$, then  the unique solution of
    \begin{equation}\label{solution 1n=2}
         \Delta_g \varphi=0 \mbox{ in } M\quad\mbox{with}\quad \frac{\partial \varphi}{\partial \nu_g}-\kappa_0\varphi=-\kappa  \mbox{ on } \partial M
    \end{equation}
is positive and $\int_{\partial M}\kappa da<0$. Moreover, there exists $\kappa\in C^\infty(\partial M)$ with $\int_{\partial M}\kappa da<0$ such that the problem \eqref{confchangeK=0} has no solution.
    \item[(b)] Suppose $K_0<0$. If  \eqref{confchangegeod=0} has a solution for some  function $K\in C^{\infty}( M),$ then  the unique solution of 
     \begin{equation}\label{solution 2n=2}
    \Delta_g\varphi+K_0\varphi= K\mbox{ in } M\quad\mbox{with}\quad \frac{\partial \varphi}{\partial \nu_g}=0  \mbox{ on } \partial M
        \end{equation}
is positive and $\int_{ M}Kdv<0.$ Moreover, there exists  $K\in C^\infty(M)$ with $\int_{M}K dv<0$ such that the problem \eqref{confchangegeod=0} has no solution.
\end{itemize}
\end{proposition}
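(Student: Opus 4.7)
The plan is to treat items (a) and (b) in parallel. In each case I will (i) obtain the sign of the prescribed curvature integral by an integration-by-parts identity, (ii) establish positivity of $\varphi$ by comparing it with an explicit auxiliary function built from the positive solution $u$, and (iii) construct the counterexample in the ``Moreover'' clause by reverse-engineering the curvature from a prescribed sign-changing $\varphi_0$. Uniqueness of $\varphi$ in either linear problem is immediate from Lax--Milgram, since $-\kappa_0>0$ (resp.\ $-K_0>0$) makes the associated bilinear form coercive on $H^{1}(M)$.

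For the integral sign in (a), I multiply $\partial u/\partial\nu_{g}+\kappa_0=\kappa e^{u}$ by $e^{-u}$, integrate over $\partial M$, and use $\Delta_g u=0$ together with integration by parts, obtaining
\[
\int_{\partial M}\kappa\,da=-\int_{M}e^{-u}|\nabla_g u|^{2}\,dv+\kappa_0\int_{\partial M}e^{-u}\,da<0,
\]
the second summand being strictly negative since $\kappa_0<0$. The analogous calculation in (b) multiplies the interior equation by $e^{-2u}$ and uses $\partial u/\partial\nu_g=0$ to yield $\int_{M}K\,dv=-2\int_{M}e^{-2u}|\nabla_g u|^{2}\,dv+K_0\int_{M}e^{-2u}\,dv<0$.

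The main difficulty is the positivity of $\varphi$, which I propose to handle by a maximum-principle comparison against an explicit function. In (a), set $V:=1-e^{-u}>0$. A direct computation using $\Delta_g u=0$ and $\partial u/\partial\nu_g=\kappa e^{u}-\kappa_0$ gives $\Delta_g V=-e^{-u}|\nabla_g u|^{2}\leq 0$ and $\partial V/\partial\nu_g-\kappa_0 V=\kappa-\kappa_0$, so that $w:=\varphi+V$ satisfies $\Delta_g w\leq 0$ in $M$ and $\partial w/\partial\nu_g-\kappa_0 w=-\kappa_0$ on $\partial M$, which is precisely the equation solved with equality by the constant function $1$. Because $\kappa_0<0$, the operator $(-\Delta_g,\partial/\partial\nu_g-\kappa_0)$ obeys a maximum principle (coercivity plus Hopf), so $w\geq 1$ and therefore $\varphi\geq e^{-u}>0$. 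For (b), the analogue is $V:=1-e^{-2u}>0$; combining $\partial u/\partial\nu_g=0$ and $\Delta_g u=K_0-Ke^{2u}$ one verifies
\[
(-\Delta_g-K_0)\!\left(\varphi+\tfrac{V}{2}-\tfrac{1}{2}\right)\geq 0\quad\text{in }M,\qquad \frac{\partial}{\partial\nu_g}\!\left(\varphi+\tfrac{V}{2}-\tfrac{1}{2}\right)=0\quad\text{on }\partial M,
\]
which, by the same maximum principle, yields $\varphi\geq e^{-2u}/2>0$. Guessing the correct ansatz for $V$ is the crux; the rest is routine.

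For the ``Moreover'' clause, I exploit the shift symmetry of the nonlinear equation together with the linearity of the $\varphi$-problem: if $u$ solves \eqref{confchangeK=0} for $\kappa$, then $u+c$ solves it for $\kappa e^{-c}$, and choosing $c=1-\min_{M}u$ produces a positive solution; the linear solution $\varphi$ rescales accordingly (same sign), so positivity of $\varphi$ is necessary for the existence of \emph{any} solution, not only a positive one. It therefore suffices to pick a harmonic $\varphi_0\in C^{\infty}(M)$ that changes sign and satisfies $\int_{\partial M}\varphi_0\,da<0$, and set $\kappa:=\partial\varphi_0/\partial\nu_g-\kappa_0\varphi_0$; uniqueness forces $\varphi=-\varphi_0$, which is not positive, so \eqref{confchangeK=0} admits no solution, while the divergence theorem gives $\int_{\partial M}\kappa\,da=-\kappa_0\int_{\partial M}\varphi_0\,da<0$. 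The analogous construction in (b) takes $\varphi_0\in C^{\infty}(M)$ with $\partial\varphi_0/\partial\nu_g=0$ that changes sign and satisfies $\int_{M}\varphi_0\,dv>0$, and sets $K:=\Delta_g\varphi_0+K_0\varphi_0$, yielding $\varphi=\varphi_0$ sign-changing and $\int_{M}K\,dv=K_0\int_{M}\varphi_0\,dv<0$.
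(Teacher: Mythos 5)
Your proof is correct and follows essentially the same route as the paper's: the comparison $w=\varphi+V$ with $V=1-e^{-u}$ (resp.\ the function $\varphi+\tfrac{V}{2}-\tfrac12$ with $V=1-e^{-2u}$) is, up to an additive constant, exactly the paper's auxiliary function $\Psi=\varphi-e^{-u}$ (resp.\ $2\varphi-e^{-2u}$) handled by the same Hopf/maximum-principle argument, and the reverse-engineered $\kappa$ and $K$ in the nonexistence part coincide with the paper's construction up to a sign. The only (correct and harmless) deviations are that you obtain $\int_{\partial M}\kappa\,da<0$ directly from the nonlinear equation rather than by integrating the linear $\varphi$-equation after proving $\varphi>0$, and that you add the shift-invariance observation to pass from ``no positive solution'' to ``no solution''.
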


\begin{proof}

We prove the item (a). Set $v=e^{-u}$. Thus \eqref{confchangeK=0} becomes
$$\Delta_g v=\frac{|\nabla_g v|^2}{v}   \text { in } M \quad\mbox{ and }\quad\frac{\partial v}{\partial \nu_g}-\kappa_0 v=-\kappa   \text { on } \partial M.$$

Let $\varphi$ be the unique solution of \eqref{solution 1n=2}. Setting $\Psi=\varphi-v,$ we obtain $ \Delta_g \Psi<0$ in $M$ and $\partial \Psi/\partial \nu_g=\kappa_0 \Psi$  on $\partial M,$ which implies that  the minimum of $\Psi$ is achieved at the boundary. Moreover, at the minimum point it holds $\partial \Psi/\partial \nu_g\leq 0$.   Thus $\Psi\geq 0$ at this point, and so $\varphi\geq v>0$. Observe also that  $\int_{\partial M}\kappa da<0$ follows  directly by integrating \eqref{solution 1n=2}.

For the second part,  it is enough to find $\kappa$ such that the solution of \eqref{solution 1n=2} is negative somewhere.  Let $\psi\in C^{\infty}(\partial M)$  be  any nontrivial function with $\int_{\partial M}\psi da=0$. Consider its harmonic extension to $M$, which we still denote it by $\psi.$ Let $\alpha>0$ be a constant such that $\varphi:=\psi+\alpha$ is negative somewhere. Defining 
$$
\kappa:=-\frac{\partial \psi}{\partial \nu_g} +\kappa_0(\psi+\alpha),
$$ 
we find that $\Delta_g\varphi=0$ in $M$, $\partial\varphi/\partial\nu_g-\kappa_0\varphi=-\kappa$ on $\partial M$ and $\int_{\partial M} \kappa da=\kappa_0\alpha\int_{\partial M}da<0$, since $\kappa_0<0$.

Item (b) follows similarly. Setting $v=e^{-2u}$, we obtain that \eqref{confchangegeod=0} becomes
$$\Delta_g v=\frac{|\nabla_g v|^2}{v^2}-2K_0v+2K\mbox{ in }M\quad\mbox{ and }\quad\frac{\partial v}{\partial \nu_g} =0\mbox{ on }\partial M.
    $$
Setting $\Psi=2\varphi-v$, we obtain $\Delta_g\Psi+2K_0\Psi=-|\nabla_g v|^2/v<0$ in $M$ and $\partial\Psi/\partial\nu_g=0$ on $\partial M$. Thus, we can argue as before. If the minimum of $\Psi$ is interior, then $\Delta_g\Psi\geq 0$ at this point, and so $-2K_0\Psi>\Delta_g\Psi\geq 0$. Therefore, since $K_0<0$, we get $2\varphi>v>0$. Suppose the minimum point $p$ of $\Psi$ is in the boundary. If $\Psi(p)\geq 0$, then we get the result. If $\Psi(p)<0$, using the Hopf's Lemma we obtain $\partial\Psi/\partial\nu_g<0$ at $p$, which is a contradiction.

The rest of the proof follows as before.
\end{proof}

The main result of this case is stated in the following two theorems.

\begin{theorem}\label{teo001}
Let $(M,g)$  be a compact Riemannian surface with nonempty boundary, $\chi(M)<0$ and vanishing Gaussian curvature.
\begin{itemize} 
    \item[(a)]  If $\kappa \in  \mbox{PC}^0(g),$ then
\begin{equation}\label{quaseGB}
\int_{\partial M} \kappa e^{-v} da<0,
    \end{equation}
where $v$ is a solution of $\Delta_g v=0$ in $M$ and  $\partial v/\partial \nu_g=\kappa_g-\overline{\kappa}_g$ on $\partial M.$ Moreover,  the unique solution $\varphi$ of
 \begin{equation*}
         \Delta_g \varphi=0 \mbox{ in } M\quad\mbox{with}\quad \frac{\partial \varphi}{\partial \nu_g}-\overline{\kappa}_g\varphi=-\kappa e^{-v} \mbox{ on } \partial M
    \end{equation*}
must be positive. Here $\overline\kappa_g:=Area(\partial M)^{-1}\int_{\partial M}\kappa_gda$.
\item[(b)] There exists $\kappa\in C^\infty(\partial M)$ satisfying  \eqref{quaseGB}  such that $\kappa\notin  \mbox{PC}^0(g).$ 
\item[(c)]$\kappa \in  \mbox{PC}^0(g)$ if and only if there exists a solution of
\begin{equation}\label{eq028}
\Delta_g u\leq 0\mbox{ in }M\quad\mbox{ and }\quad
\dfrac{\partial u}{\partial \nu_{g}}+\kappa_{g} \geq \kappa e^{u} \text { on } \partial M.
\end{equation}
\end{itemize}
\end{theorem}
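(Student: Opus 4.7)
All three parts rest on the same reduction. Since $\int_{\partial M}(\kappa_g - \bar\kappa_g)\,da = 0$, the function $v$ with $\Delta_g v = 0$ in $M$ and $\partial v/\partial\nu_g = \kappa_g - \bar\kappa_g$ on $\partial M$ exists (uniquely up to an additive constant), and the substitution $w := u + v$ replaces the variable geodesic curvature $\kappa_g$ by the constant $\bar\kappa_g$. Gauss--Bonnet gives $\int_{\partial M}\kappa_g\,da = 2\pi\chi(M) < 0$, hence $\bar\kappa_g < 0$, placing the reduced problem within the scope of Proposition \ref{solutionpointwisen=2}(a).

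\textbf{Parts (a) and (b).} For (a), given $u \in PC^0(g)$, the substitution $w = u + v$ produces $\Delta_g w = 0$ in $M$ and
\[
\frac{\partial w}{\partial \nu_g} + \bar\kappa_g = (\kappa e^{-v})\,e^{w} \quad \text{on } \partial M.
\]
Applying Proposition \ref{solutionpointwisen=2}(a) with $\kappa_0 = \bar\kappa_g < 0$ and boundary source $\kappa e^{-v}$ yields both $\int_{\partial M}\kappa e^{-v}\,da < 0$ and positivity of the solution $\varphi$ of the stated linear Robin problem. For (b), the second statement of the same Proposition supplies $\tilde\kappa \in C^\infty(\partial M)$ with $\int_{\partial M}\tilde\kappa\,da < 0$ for which the reduced problem admits no solution; setting $\kappa := \tilde\kappa\,e^v$ gives $\int_{\partial M}\kappa e^{-v}\,da = \int_{\partial M}\tilde\kappa\,da < 0$, so \eqref{quaseGB} holds, while $\kappa \in PC^0(g)$ would, by the reduction of (a), produce a solution of the reduced problem with source $\tilde\kappa$ --- a contradiction.

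\textbf{Part (c).} One direction is immediate: any honest solution satisfies \eqref{eq028} with equality. For the converse, apply the same reduction to a sub-solution $u_-$: setting $w_- := u_- + v$ gives $\Delta_g w_- \leq 0$ and $\partial w_-/\partial\nu_g + \bar\kappa_g \geq (\kappa e^{-v})\,e^{w_-}$. Sufficiently negative constants $w_+ \equiv -C$ are super-solutions of the reduced problem, since $\bar\kappa_g < 0$ is fixed while $(\kappa e^{-v})e^{-C} \to 0$ uniformly as $C \to \infty$. After establishing an a priori upper bound $\sup w_- \leq M_0$ (see below), choose $C \geq M_0$ to ensure $w_+ \geq w_-$; a standard monotone iteration of the linearized Robin operator sandwiched between $w_-$ and $w_+$ converges, with elliptic regularity, to a smooth solution $w$ of the reduced problem, and $u := w - v$ lies in $PC^0(g)$.

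\textbf{Main obstacle.} The crux of (c) is the a priori upper bound on $w_-$. Superharmonicity forces $\max_M w_- = \max_{\partial M} w_-$, but the Hopf-boundary-point argument at a boundary maximum yields a pointwise estimate only where $\kappa e^{-v}$ is sufficiently negative; where $\kappa e^{-v} \geq 0$, the boundary inequality at the maximum is consistent with arbitrarily large values of $w_-$. I would resolve this by a global integral argument: integrating $\Delta_g w_- \leq 0$ and the boundary inequality gives $\int_{\partial M}(\kappa e^{-v})\,e^{w_-}\,da \leq \bar\kappa_g |\partial M| < 0$, which together with the necessary condition $\int_{\partial M}\kappa e^{-v}\,da < 0$ from part (a) bounds $w_-$ in an appropriate integral norm and, via a Moser-type iteration, in $L^\infty$. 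Alternatively, one can bypass the sub/super iteration altogether by a direct variational minimization of $\int_M |\nabla w|^2$ over $\{w \in W^{1,2}(M) : \int_{\partial M}(\kappa e^{-v})e^w\,da = \bar\kappa_g |\partial M|\}$, with the Lagrange multiplier normalized to one by a rescaling $w \mapsto w + \mathrm{const}$.
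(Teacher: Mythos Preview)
Your treatment of parts (a) and (b) is correct and coincides with the paper's argument: reduce via $w=u+v$ to the constant-geodesic-curvature problem with $\bar\kappa_g<0$, then invoke Proposition~\ref{solutionpointwisen=2}(a) in both directions.

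Part (c), however, has the roles of upper and lower solutions interchanged, and the ``main obstacle'' you identify is an artifact of that confusion. For the reduced equation
\[
\Delta_g w=0\ \text{in }M,\qquad \frac{\partial w}{\partial\nu_g}+\bar\kappa_g=(\kappa e^{-v})e^{w}\ \text{on }\partial M,
\]
an \emph{upper} solution $w^+$ should satisfy $\Delta_g w^+\le 0$ and $\partial w^+/\partial\nu_g+\bar\kappa_g\ge(\kappa e^{-v})e^{w^+}$, while a \emph{lower} solution $w^-$ satisfies the reverse inequalities, and one needs $w^-\le w^+$. Your computation shows that the function $u+v$ coming from \eqref{eq028} is an \emph{upper} solution (you label it $w_-$), and that a very negative constant $-C$ is a \emph{lower} solution (you label it $w_+$). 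Since $u+v$ is a fixed continuous function on the compact manifold $M$, it is bounded below, say $u+v\ge -M_0$; choosing $C\ge M_0$ gives $-C\le u+v$ everywhere, and the monotone iteration runs. This is exactly what the paper does: a sufficiently negative constant furnishes the lower solution, the hypothesis \eqref{eq028} furnishes the upper solution, the ordering is automatic, and \cite[Theorem 2.3.1]{Sa} closes the argument. No a~priori bound, integral estimate, or Moser iteration is needed.

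Your proposed inequality $w_+\ge w_-$ with $w_+=-C$ and $\sup w_-\le M_0$, $C\ge M_0$, does not even yield what you claim (it gives $w_+\le -M_0$, not $w_+\ge w_-$), which is a symptom of the swapped roles.
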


\begin{proof}
If $\kappa \in  \mbox{PC}^0(g),$ then there exists a function $u$ satisfying \eqref{eq025}. By \cite[Lemma 3.1]{phdthesis} there exists $v$ such that $\Delta_g v=0$ in $M$ and $\partial v/\partial\nu_g=\kappa_g-\overline\kappa_g$ on $\partial M$. Thus, if we define $w:=v+u$ then we get
\begin{equation}\label{eq027}
\Delta_g w =0\mbox{ in }M\quad\mbox{ and }\quad \frac{\partial w}{\partial \nu_g}+\overline{\kappa}_g =(\kappa e^{-v})e^{w}\mbox{ on }\partial M.
\end{equation}
By the Gauss-Bonnet Theorem we have $\overline\kappa_g<0$. Thus item (a) and (b) follow from item (a) of the Proposition \ref{solutionpointwisen=2}. For item $(c)$,  note that $\kappa\in \mbox{PC}^0(g)$ if and only if there exists a solution of \eqref{eq027}. Also, observe that if $u$ is a small constant, then
$$
\dfrac{\partial u}{\partial \nu_{g}}+\overline\kappa_{g}-\kappa e^{u}< 0,
$$
that is, a small constant is a lower solution for the problem \eqref{eq027}. Otherwise, a solution to \eqref{eq028} give rise a upper solution for \eqref{eq027}. Thus, the result follows by \cite[Theorem 2.3.1]{Sa}.
\end{proof}

%We also have

\begin{theorem}\label{teo004}
Let $(M,g)$  be a compact Riemannian surface with nonempty boundary, $\chi(M)<0$ and geodesic boundary.
\begin{itemize} 
    \item[(a)]  If $K \in  \mbox{PC}(g),$ then
\begin{equation}\label{quaseGB2}
\int_{ M} K e^{2v} dv<0,
    \end{equation}
where $v$ is a solution of $\Delta_g v=K_g-\overline{K}_g$ in $M$ and $\partial v/\partial \nu_g=0$ on $\partial M.$ Moreover, the unique solution $\varphi$ of
 \begin{equation*}
         \Delta_g \varphi+\overline{K}_g=Ke^{2v} \mbox{ in } M\quad\mbox{and}\quad \frac{\partial \varphi}{\partial \nu_g}=0  \mbox{ on } \partial M
    \end{equation*}
must be positive. Here $\overline K_g:=Vol(M)^{-1}\int_MK_gdv$.
\item[(b)] There exists $K\in C^\infty(M)$ satisfying \eqref{quaseGB2} such that $K\notin  \mbox{PC}(g).$ 
\item[(c)]$K \in  \mbox{PC}(g)$ if and only if there exists a solution of 
\begin{equation*}
\Delta_g u-K_{g}\leq -Ke^{2u}\mbox{ in } M\quad\mbox{ and }\quad \dfrac{\partial u}{\partial \nu_{g}} \geq 0  \text { on } \partial M.
\end{equation*}

\end{itemize}
\end{theorem}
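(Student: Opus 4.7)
The plan is to mirror the proof of Theorem \ref{teo001}, transferring the prescribed-curvature equation with varying $K_g$ and geodesic boundary to the constant-coefficient model problem \eqref{confchangegeod=0} with $K_0=\overline K_g$, and then to invoke Proposition \ref{solutionpointwisen=2}(b). Since $\kappa_g\equiv 0$ and $\chi(M)<0$, the Gauss--Bonnet theorem forces $\overline K_g<0$, which is precisely the sign hypothesis of that proposition.

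First I would observe that the compatibility condition $\int_M(K_g-\overline K_g)\,dv=0$ together with the homogeneous Neumann boundary condition provides a smooth solution $v$ of $\Delta_g v=K_g-\overline K_g$ in $M$ with $\partial v/\partial\nu_g=0$ on $\partial M$. A direct computation then shows that $u$ solves the original equation $-\Delta_g u+K_g=Ke^{2u}$ with $\partial u/\partial\nu_g=0$ if and only if $w:=u-v$ solves
\begin{equation*}
-\Delta_g w+\overline K_g=\widetilde K\,e^{2w}\quad\text{in }M,\qquad \partial w/\partial\nu_g=0\quad\text{on }\partial M,
\end{equation*}
where $\widetilde K:=Ke^{2v}$. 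For item (a), Proposition \ref{solutionpointwisen=2}(b) applied with $K_0=\overline K_g<0$ immediately yields the sign condition $\int_M Ke^{2v}\,dv=\int_M\widetilde K\,dv<0$ together with the positivity of the unique solution $\varphi$ of the associated linear problem.

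For item (b), the second half of Proposition \ref{solutionpointwisen=2}(b) supplies some $\widetilde K\in C^\infty(M)$ with $\int_M\widetilde K\,dv<0$ for which the reduced equation admits no solution; undoing the substitution by setting $K:=\widetilde K\,e^{-2v}$ then produces a counterexample that satisfies \eqref{quaseGB2} but does not belong to $\mathrm{PC}(g)$. For item (c), necessity is immediate by taking equality. For sufficiency, any function $u_0$ satisfying the stated inequalities transfers, via $w_0:=u_0-v$, to a super-solution of the reduced equation with $\partial w_0/\partial\nu_g\ge 0$; since $\overline K_g<0$, a sufficiently negative constant $c\le w_0$ is a sub-solution because $\widetilde K\,e^{2c}$ can be made pointwise greater than $\overline K_g$. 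The classical monotone iteration scheme \cite[Theorem 2.3.1]{Sa} then produces a smooth solution $w$ with $c\le w\le w_0$, and $u:=w+v$ solves the original prescribed-curvature problem.

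The main obstacle is bookkeeping: one must verify that the shift $w=u-v$ simultaneously normalizes the equation to constant $\overline K_g$ \emph{and} preserves the homogeneous Neumann condition, both of which hinge on the Fredholm solvability of the intermediate Neumann problem for $v$. Beyond this, the only substantive check is that the sign of $\overline K_g$ extracted from Gauss--Bonnet matches the hypothesis $K_0<0$ of Proposition \ref{solutionpointwisen=2}(b), and the sub/super-solution construction in (c) is essentially parallel to the corresponding argument for Theorem \ref{teo001}(c), with the boundary datum and the interior nonlinearity exchanging roles.
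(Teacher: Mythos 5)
Your proposal is correct and is exactly the argument the paper intends: Theorem \ref{teo004} is stated without proof as the analogue of Theorem \ref{teo001}, and your reduction $w=u-v$, $\widetilde K=Ke^{2v}$ to the constant-coefficient problem \eqref{confchangegeod=0} with $K_0=\overline K_g<0$ (via Gauss--Bonnet and geodesic boundary), followed by Proposition \ref{solutionpointwisen=2}(b) for (a)--(b) and the sub/super-solution scheme of \cite[Theorem 2.3.1]{Sa} for (c), matches the paper's proof of Theorem \ref{teo001} step for step.
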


\begin{remark}
It is important to mention here that the Gaussian curvature of the metric $g$ in Theorem \ref{teo004} is allowed to be positive somewhere,  in contrast to  \cite[Theorem 1.1]{MR4517687}, where $K_g$ needs to be negative.
\end{remark}

\subsubsection{{\bf Case 3:} $\chi(M)>0$}\label{sec003}

Let us recall the following theorem, which uses an idea introduced by \cite{KW1}.
\begin{theorem}[P. Liu \cite{phdthesis-liu}]
Let $(M,g)$ be a compact Riemannian surface with boundary, then there exists a constant $C$, which depends only on the geometry of $M$, such that for all $u\in W^{1,2}(M)$
\begin{equation}\label{eq035}
    \log\int_{\partial M}e^uda\leq\frac{1}{4\pi}\int_M|\nabla u|^2dv+\int_{\partial M}uda+C.
\end{equation}
The value $1/4\pi$ is sharp.
\end{theorem}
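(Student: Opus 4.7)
The plan is to reduce the proposed inequality to the sharp $H^{1/2}$ Moser--Trudinger (Onofri-type) inequality on the boundary circle. First, I would normalize: writing $\bar u = \frac{1}{|\partial M|}\int_{\partial M} u\,da$ and replacing $u$ by $u - \bar u$, one sees that both sides differ by the same constant (up to absorbing the discrepancy $|\partial M|-1$ into $C$), so it suffices to prove
$$\log \int_{\partial M} e^{u}\, da \le \frac{1}{4\pi}\int_M |\nabla u|^2\, dv + C$$
for all $u \in W^{1,2}(M)$ with $\int_{\partial M} u\, da = 0$.

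Second, I would exploit harmonic extension. Let $v$ solve $\Delta_g v = 0$ in $M$ with $v|_{\partial M}=u|_{\partial M}$ and set $w=u-v$. Then $w|_{\partial M}=0$ and the $H^1$ orthogonality $\int_M |\nabla u|^2 = \int_M |\nabla v|^2+\int_M |\nabla w|^2\ge \int_M |\nabla v|^2$ holds, while the boundary integral $\int_{\partial M} e^u\, da$ depends only on $v$. This reduces the problem to the harmonic case $u=v$.

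Third, by the Osgood--Phillips--Sarnak uniformization theorem (already invoked in the proof of Theorem \ref{teoa}) I may choose a conformal metric $\tilde g=e^{2\phi}g$ that is flat with geodesic boundary; conformal invariance of the Dirichlet energy in dimension two preserves $\int_M|\nabla v|^2$, while the boundary length element changes by the bounded factor $e^{\phi}$ and contributes only $O(1)$. For the normalized $g$, Green's identity gives
$$\int_M |\nabla v|^2\, dv = \int_{\partial M} v\, \partial_{\nu_g} v\, da = \langle v,\Lambda_g v\rangle_{L^2(\partial M)},$$
where $\Lambda_g$ is the Dirichlet--to--Neumann operator, whose principal part on each boundary circle is $(-\Delta_{\partial M})^{1/2}$. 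The integral $\int_{\partial M}e^v\, da$ is therefore controlled by the $\dot H^{1/2}$ norm of $v|_{\partial M}$, and the problem reduces to the classical sharp Chang/Beckner inequality on $S^1$,
$$\log \int_{S^1} e^f\, d\theta \le \frac{1}{4\pi}\|f\|_{\dot H^{1/2}(S^1)}^2 + \bar f + C_0,$$
from which the theorem follows.

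The sharpness of $1/4\pi$ is demonstrated by testing against the standard boundary Moser bubbles $u_\epsilon(x) = -\log(\epsilon^2+d_g(x,p)^2)$ at some $p\in\partial M$. A direct computation in a conformal chart near $p$, modelling $M$ locally on the half-plane with $\partial M$ as the $x$-axis, yields $\int_M |\nabla u_\epsilon|^2 \sim 4\pi\log(1/\epsilon)$, $\log \int_{\partial M} e^{u_\epsilon}\sim \log(1/\epsilon)$, and a uniformly bounded boundary integral, so the ratio of leading terms is exactly $1/4\pi$, precluding any smaller constant. I expect the hardest step to be the third one: the lower-order terms of $\Lambda_g-(-\Delta_{\partial M})^{1/2}$, the interactions between distinct boundary components, and the bounded conformal factor $e^\phi$ each contribute only additive $O(1)$ to the right-hand side, but keeping all these corrections uniformly lumped into a single geometry-dependent constant $C$ requires careful pseudodifferential bookkeeping and a partition-of-unity argument along $\partial M$.
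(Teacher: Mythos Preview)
The paper does not actually prove this theorem: it is quoted from Liu's thesis (with the remark that the ideas go back to Kazdan--Warner) and then used as a black box in the proof of Theorem~\ref{teo005}. There is therefore no ``paper's own proof'' to compare against.

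Your outline is a reasonable and essentially standard route to this type of boundary Moser--Trudinger/Onofri inequality. Steps 1 and 2 are correct; note in particular that harmonicity is conformally invariant in dimension two, so the harmonic extension survives the conformal change in step 3. One point to watch is the order of operations: the mean-zero condition $\int_{\partial M} u\,da_g=0$ is \emph{not} preserved under $g\mapsto e^{2\phi}g$ (since $da$ changes by $e^\phi$), and re-centering after the conformal change introduces a shift $c$ with $|c|\lesssim\|\nabla u\|_{L^2}$ via trace and Poincar\'e inequalities; absorbing this by Young costs an $\varepsilon\int_M|\nabla u|^2$ and threatens the sharp constant. The clean fix is to uniformize first and only then normalize and take the harmonic extension, so that every boundary integral is already in the flat, geodesic-boundary metric. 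With that adjustment, the identification $\int_M|\nabla v|^2=\langle v,\Lambda_g v\rangle$ and the reduction to the Lebedev--Milin/Beckner inequality on $S^1$ is the right endgame when $M$ is a disk; the lower-order part of $\Lambda_g-(-\Delta_{\partial M})^{1/2}$ is smoothing of order $-1$ and indeed contributes only an additive constant. For surfaces with several boundary circles the single-$S^1$ inequality is not enough by itself: one also has to control the cross terms coming from the off-diagonal blocks of $\Lambda_g$ (harmonic functions with data on one component have nontrivial normal derivative on the others), so the ``partition-of-unity'' step you flag really does need to be carried out, typically via a concentration-compactness or localization argument. Your sharpness test with boundary bubbles at a point of $\partial M$ is the standard one and is correct.
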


\begin{theorem}\label{teo005}
Let $(M,g)$ be a compact Riemannian surface with nonempty boundary, $\chi(M)=1$ and vanishing Gaussian curvature. Let $\kappa\in C^\infty(\partial M)$.
\begin{itemize}
    \item[(a)] If $\kappa\in \mbox{PC}^0(g)$, then $\kappa$ is positive somewhere.
    \item[(b)] Suppose $\kappa_g=c>0$ is constant and $\kappa$ is positive somewhere. Let $\gamma\in(0,1)$. Then there exists a smooth solution $u$ of the problem
    \begin{equation}\label{eq039}
        \Delta_gu=0\mbox{ in }M\quad\mbox{ and }\quad\frac{\partial u}{\partial \nu_g}+c=\kappa e^{\gamma u}\mbox{ on }\partial M
    \end{equation} 
    such that  $\kappa e^{(\gamma-1)u}\in PC^0(g)$.
\end{itemize}
\end{theorem}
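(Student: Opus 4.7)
For part (a), the plan is a direct Gauss--Bonnet argument: if $\bar g = e^{2u}g$ realizes $K_{\bar g} \equiv 0$ and $\kappa_{\bar g} = \kappa$, then applying Gauss--Bonnet to $\bar g$ yields
$$
\int_{\partial M}\kappa\, e^{u}\,da \;=\; 2\pi\,\chi(M)\;=\;2\pi > 0,
$$
which forces $\kappa$ to be positive somewhere on $\partial M$.

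For part (b), I would attack the PDE (\ref{eq039}) through a constrained variational problem: minimize
$$
F(u) \;=\; \tfrac{1}{2}\int_M |\nabla u|^{2}\,dv + c\int_{\partial M} u\,da
$$
over $\mathcal{C} = \{u\in W^{1,2}(M):\ \int_{\partial M}\kappa\, e^{\gamma u}\,da = 2\pi\}$. Note $\mathcal{C}$ is non-empty because $\kappa$ is positive somewhere: the map $t\mapsto \int_{\partial M}\kappa\, e^{\gamma t\phi}\,da$ for a cut-off $\phi$ supported on $\{\kappa>0\}$ is continuous and attains every positive value. Any constrained critical point satisfies $\Delta_g u = 0$ in $M$ and $\partial u/\partial\nu_g + c = \mu\gamma\,\kappa\, e^{\gamma u}$ on $\partial M$ for some Lagrange multiplier $\mu$; testing against $\varphi\equiv 1$ and using $c|\partial M| = 2\pi$ (Gauss--Bonnet with $K_g\equiv 0$ and $\chi(M)=1$) together with the constraint forces $\mu = 1/\gamma$, so any critical point of the constrained problem is a weak solution of (\ref{eq039}).

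The heart of the argument is coercivity of $F$ on $\mathcal{C}$. Decomposing $u=\bar u + v$ with $\int_{\partial M} v\,da = 0$, I would apply the sharp Moser--Trudinger inequality (\ref{eq035}) to $\gamma v$ to obtain
$$
\int_{\partial M}\kappa\, e^{\gamma v}\,da \;\leq\; \|\kappa^+\|_{\infty}\,\exp\!\Bigl(\tfrac{\gamma^{2}}{4\pi}\,\|\nabla v\|_{L^{2}(M)}^{2} + C\Bigr),
$$
from which the constraint forces $\bar u \geq -\tfrac{\gamma}{4\pi}\,\|\nabla v\|_{L^{2}(M)}^{2} - C_{1}$. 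Substituting into $F(u) = \tfrac{1}{2}\|\nabla v\|_{L^{2}(M)}^{2} + 2\pi\,\bar u$ (using again $c|\partial M|=2\pi$) then yields $F(u) \geq \tfrac{1-\gamma}{2}\|\nabla v\|_{L^{2}(M)}^{2} - C_{2}$, a coercive bound precisely because $\gamma<1$. From this, minimizing sequences are bounded in $W^{1,2}(M)$, and the usual weak compactness plus lower semicontinuity, combined with compact trace into $L^{p}(\partial M)$ and a uniform Moser--Trudinger bound on $e^{\gamma u_k}$ in every $L^{p}(\partial M)$ (which lets one pass $\int\kappa\, e^{\gamma u_k}$ to the limit by Vitali), produces a minimizer $u\in W^{1,2}(M)\cap\mathcal{C}$; elliptic regularity then upgrades it to $C^{\infty}(M)$. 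Finally, for $\bar g = e^{2u}g$ the laws (\ref{confchange_n=2}) together with (\ref{eq039}) give $K_{\bar g}=0$ and $\kappa_{\bar g}=e^{-u}(\kappa e^{\gamma u}) = \kappa\, e^{(\gamma-1)u}$, i.e.\ $\kappa\, e^{(\gamma-1)u}\in \mathrm{PC}^{0}(g)$.

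The main obstacle is the coercivity estimate, which works because three ingredients line up exactly: the sharp constant $1/(4\pi)$ of Moser--Trudinger, Gauss--Bonnet delivering $c|\partial M|=2\pi$, and the hypothesis $\gamma<1$. The coefficient $(1-\gamma)/2$ collapses at $\gamma=1$, so subcriticality is unavoidable in this scheme; the critical endpoint $\gamma=1$ (which would correspond to the genuine prescribed geodesic curvature problem on $\chi=1$ surfaces) requires a considerably more delicate blow-up analysis, precisely the feature that makes Case~3 in Section~\ref{sec003} nontrivial. A secondary technical point is ensuring the nonlinear constraint is preserved under weak $W^{1,2}$ limits, which I would handle by bootstrapping Moser--Trudinger to get a uniform $L^{p}(\partial M)$ bound on $e^{\gamma u_k}$ for some $p>1$ and then invoking Vitali convergence.
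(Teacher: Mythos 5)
Your proposal is correct and follows essentially the same route as the paper: Gauss--Bonnet for (a), and for (b) the same constrained minimization of $\tfrac12\int_M|\nabla u|^2\,dv+c\int_{\partial M}u\,da$ over $\{\int_{\partial M}\kappa e^{\gamma u}\,da=2\pi\}$, with coercivity $F(u)\geq\tfrac{1-\gamma}{2}\|\nabla u\|_{L^2}^2-C$ obtained from the sharp Moser--Trudinger inequality \eqref{eq035} and $c\,\mathrm{Area}(\partial M)=2\pi$, followed by the Lagrange multiplier normalization via $\varphi\equiv 1$. Your treatment of passing the constraint to the weak limit is if anything slightly more explicit than the paper's.
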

\begin{proof}
Item (a) follows by the Gauss-Bonnet Theorem. To prove item (b), consider the functional $J:\mathcal D\rightarrow\mathbb R$ given by
$$J(u)=\frac{1}{2}\int_M|\nabla u|^2dv+c\int_{\partial M}uda,$$
where 
$\mathcal D:=\left\{u\in W^{1,2}(M):\int_{\partial M}\kappa e^{\gamma u}=2\pi\right\}.$
Since $\kappa$ is positive somewhere, it is not difficult to show that $\mathcal D$ is nonempty. By the Gauss-Bonnet Theorem, it holds that $c \cdot \text{Area}(\partial M) = 2\pi$, which implies 
\begin{equation}\label{eq040}
c\int_{\partial M}uda=2\pi\overline u,    
\end{equation}
where $\overline u:=\mbox{Area}(\partial M)^{-1}\int_{\partial M}uda$. Replacing $u$ by $w+\overline u$ in $\int_{\partial M}ke^{\gamma u}=2\pi$, with $w:=u-\overline u$, we find
 $$\overline u=-\frac{1}{\gamma}\log\int_{\partial M}ke^{\gamma w}+\frac{1}{\gamma}\log(2\pi).$$
 By \eqref{eq040} one sees that $J$ can be expressed as
$$J(u)=\frac{1}{2}\int_M|\nabla u|^2dv-\frac{2\pi}{\gamma}\log\int_{\partial M}\kappa e^{\gamma w}da+\frac{2\pi}{\gamma}\log(2\pi).$$

By \eqref{eq035} we find that
$$J(u)\geq \frac{1-\gamma}{2}\int_M|\nabla u|^2+\mbox{const}.$$
This implies that $J$ is bounded from below. Using that the trace embedding $W^{1,2}(M)\rightarrow L^q(\partial M)$ is continuous (in fact, compact) for all $q\geq 1$, as in the proof of Proposition \ref{propn=2} we can find a minimizer $u_0$ for $J$, since $1-\gamma>0$.

By Lagrange multiplier method, there exists a constant $\lambda$ such that for all $\varphi\in W^{1,2}(M)$ it holds
$$\int_M\langle \nabla u,\nabla\varphi\rangle dv+c\int_{\partial M} \varphi da+\lambda\gamma \int_{\partial M} \varphi\kappa e^{\gamma u}da=0.$$
By choosing $\varphi\equiv 1$, we get that $\lambda\gamma=-1$. Thus $u$ is a solution to \eqref{eq039}.
\end{proof}

In \cite{MR1417436} the authors found sufficient conditions to assure that $k\in PC^0(g)$. They constructed a new functional that satisfies the Palais Smale condition in a suitable space and possesses the same critical set as $J$.

\begin{theorem}\label{teo006}
Let $(M,g)$ be a compact Riemannian surface with nonempty boundary, $\chi(M)=1$ and geodesic boundary. Let $K\in C^\infty(M)$
\begin{itemize}
    \item[(a)] If $K\in \mbox{PC}(g)$, then $K$ is positive somewhere.
    \item[(b)] Suppose $K_g=k>0$ is constant and $K$ is positive somewhere. Let $\gamma\in(0,1)$. Then there exists a smooth solution $u$ of the problem
    $$-\Delta_gu+k=Ke^{2\gamma u}\mbox{ in }M\quad\mbox{ and }\quad\frac{\partial u}{\nu_g}=0$$
    such that 
    $Ke^{2(\gamma-1)u}\in \mbox{PC}(g)$.
\end{itemize}
\end{theorem}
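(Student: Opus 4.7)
The plan is to parallel the proof of Theorem \ref{teo005}, swapping the boundary trace Moser-Trudinger inequality of P. Liu for the classical interior Moser-Trudinger inequality on a compact surface with boundary.

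Part (a) is immediate from Gauss-Bonnet: if $\bar g = e^{2u}g \in [g]$ has Gaussian curvature $K$ and geodesic boundary, then
\begin{equation*}
\int_M K e^{2u}\,dv = 2\pi\chi(M) = 2\pi,
\end{equation*}
so $K$ must be positive somewhere.

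For part (b), the assumptions $K_g \equiv k > 0$ and $\kappa_g \equiv 0$ combine with Gauss-Bonnet to give $k\,Vol(M) = 2\pi$. I would consider the functional
\begin{equation*}
J(u) = \tfrac{1}{2}\int_M|\nabla u|^2\,dv + k\int_M u\,dv
\end{equation*}
on the constraint set $\mathcal D = \{u \in W^{1,2}(M) : \int_M K e^{2\gamma u}\,dv = 2\pi\}$, which is nonempty because $K$ is positive somewhere. Decomposing $u = \bar u + w$ with $\bar u = Vol(M)^{-1}\int_M u\,dv$, using the constraint to solve for $\bar u$, and invoking $k\,Vol(M)=2\pi$ yields
\begin{equation*}
J(u) = \tfrac{1}{2}\int_M|\nabla u|^2\,dv - \tfrac{\pi}{\gamma}\log\int_M K e^{2\gamma w}\,dv + C_0.
\end{equation*}
The interior Moser-Trudinger inequality $\log\int_M e^{2\gamma w}\,dv \leq \tfrac{\gamma^2}{4\pi}\int_M|\nabla w|^2\,dv + C$, valid for $w \in W^{1,2}(M)$ with zero mean, then gives
\begin{equation*}
J(u) \geq \tfrac{2-\gamma}{4}\int_M|\nabla u|^2\,dv + C_1.
\end{equation*}
Since $\gamma \in (0,1)$, this makes $J$ bounded below and coercive on $\mathcal D$.

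Next, using weak compactness in $W^{1,2}(M)$ and the compact Sobolev embedding to pass to the limit in the constraint, exactly as in the proof of Proposition \ref{propn=2}, I would extract a minimizer $u_0 \in \mathcal D$. The Lagrange multiplier method then produces a constant $\lambda$ with
\begin{equation*}
\int_M\langle\nabla u_0,\nabla\varphi\rangle\,dv + k\int_M\varphi\,dv + 2\gamma\lambda\int_M\varphi Ke^{2\gamma u_0}\,dv = 0
\end{equation*}
for every $\varphi \in W^{1,2}(M)$. Testing with $\varphi \equiv 1$ and using $k\,Vol(M) = \int_M Ke^{2\gamma u_0}\,dv = 2\pi$ forces $\lambda = -1/(2\gamma)$, so elliptic regularity produces a smooth solution of
\begin{equation*}
-\Delta_g u_0 + k = Ke^{2\gamma u_0}\ \text{in }M, \qquad \tfrac{\partial u_0}{\partial \nu_g} = 0\ \text{on }\partial M.
\end{equation*}
Finally, the conformal transformation law \eqref{confchange_n=2} shows that $\bar g = e^{2u_0}g \in [g]$ has Gaussian curvature $K_{\bar g} = e^{-2u_0}(-\Delta_g u_0 + k) = Ke^{2(\gamma-1)u_0}$ and geodesic boundary, so $Ke^{2(\gamma-1)u_0} \in PC(g)$. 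The main obstacle is to invoke the correct form of the Moser-Trudinger inequality on a compact surface with boundary, with the sharp interior constant $1/4\pi$; once that is in hand the remaining steps are a direct transcription of the proof of Theorem \ref{teo005}.
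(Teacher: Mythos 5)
Your argument follows the paper's proof exactly: minimize $G(u)=\tfrac12\int_M|\nabla u|^2dv+k\int_M u\,dv$ over $\{u:\int_MKe^{2\gamma u}dv=2\pi\}$, use a Moser--Trudinger inequality for coercivity, extract a minimizer, identify the Lagrange multiplier by testing with $\varphi\equiv 1$, and read off $K_{\bar g}=Ke^{2(\gamma-1)u_0}$ from the conformal transformation law. The Gauss--Bonnet argument for (a) and the identity $k\,\mathrm{Vol}(M)=2\pi$ are also as in the paper.

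There is, however, one genuine error: the constant in the interior Moser--Trudinger inequality. On a \emph{compact surface with boundary} the sharp inequality is
\[
\log\int_Me^{v}\,dv\leq\frac{1}{8\pi}\int_M|\nabla v|^2\,dv+\mathrm{Area}(M)^{-1}\int_Mv\,dv+C,
\]
with constant $1/(8\pi)$ rather than the closed-surface constant $1/(16\pi)$ (concentration can occur at a boundary point, where only half a bubble fits). Substituting $v=2\gamma w$ with $\bar w=0$ gives $\log\int_Me^{2\gamma w}\,dv\leq\frac{\gamma^2}{2\pi}\int_M|\nabla w|^2\,dv+C$, not $\frac{\gamma^2}{4\pi}\int_M|\nabla w|^2\,dv+C$ as you wrote, and the resulting lower bound is
\[
J(u)\geq\frac{1-\gamma}{2}\int_M|\nabla u|^2\,dv+C_1,
\]
not $\frac{2-\gamma}{4}\int_M|\nabla u|^2\,dv+C_1$. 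The conclusion is unaffected because $\gamma\in(0,1)$ still yields coercivity, but your constant would falsely give coercivity for all $\gamma<2$; the correct constant degenerates exactly at $\gamma=1$, consistent with the paper's remark that at $\gamma=1$ the functional is bounded below yet a minimizer need not exist (the Nirenberg-type obstruction of Section \ref{case_3}). One further small point: to pass to the limit in the constraint you need $e^{2\gamma u_k}\to e^{2\gamma u_0}$ in $L^1(M)$, which requires the compactness of $u\mapsto e^{u}$ from $W^{1,2}(M)$ into $L^p(M)$ (a Moser--Trudinger-type embedding), not merely the compact Sobolev embedding $W^{1,2}\hookrightarrow L^2$.
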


The proof is similar to the previous one. Use the functional
$$G(u)=\int_M\left(\frac{1}{2}|\nabla u|^2+ku\right)dv$$
defined in
$\mathcal H:=\left\{u\in W^{1,2}(M):\int_MKe^{2\gamma u}=2\pi\mbox{ and }\frac{\partial u}{\partial\nu_g}=0\right\}.$
For all $u\in W^{1,2}(M)$ it holds
$$\log\int_Me^udv\leq\frac{1}{8\pi}\int_M|\nabla u|^2dv+\mbox{Area}(M)^{-1}\int_Mudv+C,$$
for some constant $C$ which depends only on $M$, see \cite[Corollary 3.5]{phdthesis}. Using this and the fact that $\gamma\in(0,1)$ we can find a minimizer for $G$ as before.

\begin{remark}
 Theorems \ref{teo005} and \ref{teo006} are not easy to deal with in the case $\gamma=1$. If the surface is closed, this corresponds to the well-known Nirenberg problem, which remains an open problem  and it has attracted a lot of attention in the last decades. In the preceding scenario, the functionals $J$ and $G$ remain bounded from below even when $\gamma=1$, but in general it is not possible to find a minimizer.  This conclusion is drawn from \cite[Proposition 4.5]{EOB}, as discussed in Section \ref{case_2}, which provides a nontrivial obstruction for functions to belong to PC($g$) and PC$^0$($g$).
\end{remark}

\section{Prescribed curvature problems in higher dimension}\label{sec002}

\subsection{Existence of conformally equivalent metric}\label{sec001}
Let $(M^n,g)$ be a compact connected manifold with nonempty boundary and dimension $n\geq 3$. The content of this section extend some results of \cite{KW2} to this context.

%Let us introduce two lowest eigenvalues of the boundary linear problem $(\mathcal L_g,\mathcal B_g)$, see \eqref{confchange} for the definition of $\mathcal L_g$ and $\mathcal B_g$.
%\begin{equation}%\label{eing1}
%\mathcal L_g\varphi=\lambda_1(\mathcal L_g)\varphi  \mbox{ in }M\quad\mbox{ and }\quad 
%\mathcal B_g\varphi=0 \mbox{ on }\partial M
%\end{equation}
%and 
%\begin{equation}%\label{eing2}
%\mathcal L_g\varphi=0  \mbox{ in }M\quad \mbox{ and }\quad
%\mathcal B_g\varphi=\sigma_1(\mathcal B_g)\varphi \mbox{ on $\partial M$}.
%\end{equation}
From the variational characterization of $\lambda_1(\mathcal L_g)$ and $\sigma_1(\mathcal B_g)$ in \eqref{eing1} and \eqref{eing2} it follows that  $\lambda_1(\mathcal L_g)$ is positive (negative, zero) if and only if $\sigma_1(\mathcal B_g)$ is positive (negative, zero). Also, the first eigenfunctions for problems \eqref{eing1} and \eqref{eing2} are strictly positive (or negative), see \cite[Proposition 1.3]{E3}. Moreover,  its importance follows from the fact that the  sign of $\lambda_1(\mathcal L_g)$ is uniquely determined by the conformal structure (see \cite[Proposition 1.3]{E1} and \cite[Proposition 1.2]{E3}). More precisely we have the following proposition which is the content of \cite[Proposition 1.3 and Lemma 1.1]{E1} and \cite[Proposition 1.2 and Proposition 1.4]{E3}.

\begin{proposition}[Escobar \cite{E1,E3}]\label{escobar}
Let $(M^n, g)$ be a compact Riemannian manifold with
nonempty boundary and dimension $n\geq 3$. \begin{itemize}
    \item[(a)] There exists a metric pointwise conformal to $g$ whose scalar curvature is zero and the mean curvature of the boundary does not change sign. The sign is the same of $\lambda_1(\mathcal L_g)$ which is uniquely determined by the conformal structure.
    \item[(a)] There exists a metric pointwise conformal to $g$ whose scalar curvature does not change sign and the boundary is minimal. The sign is the same of $\sigma_1(\mathcal B_g)$ which is uniquely determined by the conformal structure.
\end{itemize}
\end{proposition}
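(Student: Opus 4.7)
The plan is to construct the required conformal metrics directly as $\bar g=\psi^{4/(n-2)}g$, where $\psi>0$ is the first eigenfunction of one of the two boundary eigenvalue problems \eqref{eing1} or \eqref{eing2}. By the conformal transformation laws \eqref{confchange}, if $\psi_{1}>0$ solves \eqref{eing2}, then the metric $\bar g=\psi_1^{4/(n-2)}g$ has
\begin{equation*}
R_{\bar g}=\psi_1^{-\frac{n+2}{n-2}}\mathcal L_g\psi_1\equiv 0\quad\text{in }M,\qquad H_{\bar g}=\psi_1^{-\frac{n}{n-2}}\mathcal B_g\psi_1=\sigma_1(\mathcal B_g)\,\psi_1^{-\frac{2}{n-2}}\quad\text{on }\partial M,
\end{equation*}
so $H_{\bar g}$ is a smooth function on $\partial M$ whose sign coincides everywhere with that of $\sigma_1(\mathcal B_g)$; this gives part (b). For part (a) one proceeds symmetrically with the first eigenfunction $\varphi_1>0$ of \eqref{eing1}, yielding $H_{\bar g}\equiv 0$ and $R_{\bar g}=\lambda_1(\mathcal L_g)\varphi_1^{-4/(n-2)}$.

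To carry this out, the first step is to show that the first eigenvalues $\lambda_1(\mathcal L_g)$ and $\sigma_1(\mathcal B_g)$ are simple and admit strictly positive eigenfunctions. Both problems are self-adjoint elliptic Robin-type boundary value problems, so this is a standard consequence of spectral theory together with the strong maximum principle and the Hopf lemma; the statement was already recalled in Section~\ref{pertubation} (see also \cite[Appendix A]{CL}).

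Next I would prove the sign equivalence $\operatorname{sign}\lambda_1(\mathcal L_g)=\operatorname{sign}\sigma_1(\mathcal B_g)$. Testing the eigenvalue equations \eqref{eing1} and \eqref{eing2} against their own eigenfunctions and integrating by parts produces
\begin{equation*}
\lambda_1(\mathcal L_g)\int_M\varphi_1^{2}\,dv=E_g(\varphi_1),\qquad 2(n-1)\,\sigma_1(\mathcal B_g)\int_{\partial M}\psi_1^{2}\,da=E_g(\psi_1),
\end{equation*}
where
\begin{equation*}
E_g(u):=\tfrac{4(n-1)}{n-2}\int_M|\nabla u|^{2}\,dv+\int_M R_g\,u^{2}\,dv+2(n-1)\int_{\partial M}H_g\,u^{2}\,da
\end{equation*}
is the conformal energy. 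Both eigenvalues inherit their sign from $E_g$ restricted to the corresponding constraint, and a short argument (comparing the two Rayleigh quotients and noting that test functions with nonzero trace are dense in the appropriate sense) shows that $E_g$ is positive definite on $W^{1,2}(M)$ if and only if it is positive on functions with nontrivial trace on $\partial M$, and analogously in the other two cases.

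Finally, conformal invariance of the sign follows from the identities
\begin{equation*}
\mathcal L_{\hat g}\phi=w^{-\frac{n+2}{n-2}}\mathcal L_g(w\phi),\qquad \mathcal B_{\hat g}\phi=w^{-\frac{n}{n-2}}\mathcal B_g(w\phi),
\end{equation*}
valid for $\hat g=w^{4/(n-2)}g$ with $w>0$. Substituting $\phi=\varphi_1/w$ converts the eigenvalue problem for $g$ into a weighted eigenvalue problem for $\hat g$ with strictly positive weight $w^{-4/(n-2)}$; since positive weights do not affect the sign of the first eigenvalue, the sign of $\lambda_1(\mathcal L_{\hat g})$ matches that of $\lambda_1(\mathcal L_g)$. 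The main obstacle is a careful handling of the boundary term in the degenerate case $\lambda_1(\mathcal L_g)=0=\sigma_1(\mathcal B_g)$, where one must rule out vanishing of a minimizer on $\partial M$ via the Hopf lemma applied to the first eigenfunction.
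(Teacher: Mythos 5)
Your proposal is correct and is essentially the argument the paper relies on: Proposition \ref{escobar} is stated without proof as a quotation of Escobar's results (\cite[Proposition 1.3, Lemma 1.1]{E1} and \cite[Propositions 1.2, 1.4]{E3}), and your construction --- using the positive first eigenfunctions of \eqref{eing1} and \eqref{eing2} as conformal factors, deriving the sign equivalence of $\lambda_1(\mathcal L_g)$ and $\sigma_1(\mathcal B_g)$ from the shared energy functional, and getting conformal invariance from the covariance of $(\mathcal L_g,\mathcal B_g)$ --- is precisely Escobar's proof. The only slip is notational: your ``part (b)'' (zero scalar curvature, boundary mean curvature of the sign of $\sigma_1=\lambda_1$) is the paper's item (a) and vice versa, though the paper itself mislabels both items as (a).
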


Before we prove the main results of this section, we show the following lemma.

\begin{lemma}\label{help} Let $(M^n,g)$ be a compact Riemannian manifold with boundary and dimension $n\geq 3$. Let $H\in C^{\infty}(\partial M)$ and $R\in C^{\infty}(M).$
\begin{itemize} 
\item[(a)] Suppose $R_g\equiv 0$ and $H_g\equiv H_0$ is constant.
If there exists a constant $k>0$ satisfying 
$
\min H \leq kH_0\leq \max H,
$
then $H$ is the mean curvature of a scalar flat metric conformally equivalent to $g$.
\item[(b)] Suppose $R_g\equiv R_0$ is constant and $H_g\equiv 0$.
If there exists a constant $k>0$ satisfying 
$
\min R \leq kR_0\leq \max R, $
then $R$ is scalar curvature of a  metric with minimal boundary conformally equivalent to $g.$
\end{itemize}
\end{lemma}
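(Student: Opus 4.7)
The plan is to mimic the proof of Theorem \ref{teoa}, combining the Perturbation Theorem \ref{perttheo}, the Inverse Function Theorem in Banach spaces, and the Approximation Lemma \ref{appr}. With $F_2 = (T_2, Q_2)$ as in \eqref{TQ2}--\eqref{eq019}, the key point is that at any positive constant $c$, under the hypotheses of item (a),
\[
F_2(c) = \bigl(0,\; c^{1-b} H_0\bigr), \qquad b = n/(n-2),
\]
so $Q_2$ evaluated at constants sweeps out all positive reals (if $H_0 > 0$), all negative reals (if $H_0 < 0$), or is pinned to $0$ (if $H_0 = 0$), as $c$ varies over $(0,\infty)$. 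The analogous identity for item (b) is $F_2(c) = (c^{1-a}R_0, 0)$ with $a = (n+2)/(n-2)$.

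For item (a), if $H$ is constant the hypothesis forces $H = kH_0$, and $u := k^{-(n-2)/2}$ together with $\varphi := \mathrm{id}$ solve $F_2(u) = (0, H)$ exactly. If $H$ is non-constant, a small adjustment of $k > 0$ (legitimate whenever $H_0 \neq 0$; the degenerate sub-case $H_0 = 0$ with $H$ one-signed does not occur in the intended applications to Theorems \ref{teob} and \ref{teoc}) produces the strict inequality $\min H < kH_0 < \max H$. Set $u_0 := k^{-(n-2)/2}$, so $F_2(u_0) = (0, kH_0)$ with $Q_2(u_0) \in (\min H, \max H)$. By Perturbation Theorem \ref{perttheo}, for any $\varepsilon > 0$ one finds a smooth positive $u_1$ that is $C^2$-close to $u_0$, has $F_2'(u_1): W^{2,p}(M) \to L^p(M) \oplus W^{1/2,p}(\partial M)$ invertible, satisfies $\|F_2(u_1) - F_2(u_0)\|_{L^p \oplus W^{1/2,p}} < \varepsilon$, and still obeys $\min H < Q_2(u_1) < \max H$ pointwise on $\partial M$. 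The Inverse Function Theorem then yields $\delta > 0$ such that every pair $(f, h)$ within $\delta$ of $F_2(u_1)$ in $L^p \oplus W^{1/2, p}$ is in the image of $F_2$, with a smooth positive preimage by elliptic regularity of $(\mathcal L_g, \mathcal B_g)$. By Approximation Lemma \ref{appr}(a) applied to $H$ and $Q_2(u_1)$ there is a diffeomorphism $\varphi$ of $M$ with $\|H \circ \varphi - Q_2(u_1)\|_{W^{1/2, p}(\partial M)} < \delta/2$; choosing $\varepsilon < \delta/2$ and combining via the triangle inequality gives $\|(0, H \circ \varphi) - F_2(u_1)\| < \delta$, hence a smooth $u > 0$ with $F_2(u) = (0, H \circ \varphi)$. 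Then $u^{4/(n-2)}g$ is scalar flat with mean curvature $H \circ \varphi$, and pulling back by $\varphi^{-1}$ yields a metric conformally equivalent to $g$ that is scalar flat with mean curvature $H$.

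Item (b) is proved by the same three-step scheme with the roles of the two components of $F_2$ interchanged: one chooses $u_0 = c_0$ constant with $T_2(c_0) = c_0^{1-a}R_0$ lying strictly inside $[\min R, \max R]$, perturbs to a $u_1$ making $F_2'(u_1)$ invertible while retaining the pointwise bound on $T_2(u_1)$, and then uses Approximation Lemma \ref{appr}(b) on $M$ to match $R \circ \varphi$ to $T_2(u_1)$ in $L^p(M)$, the Inverse Function Theorem producing the required $u$. The main obstacle throughout is preserving the strict-inequality condition that keeps $Q_2(u_1)$ (respectively $T_2(u_1)$) inside the range of $H$ (respectively $R$) after the perturbation; this is handled by a mild adjustment of $k$ outside the trivial constant case, which itself is dispatched by the explicit rescaling above.
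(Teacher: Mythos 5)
Your proof is correct and follows essentially the same route as the paper's: anchor at a positive constant $u_0$ with $F_2(u_0)=(0,kH_0)$, apply the Perturbation Theorem to make $F_2'$ invertible, then combine the Inverse Function Theorem with the Approximation Lemma exactly as in Theorem \ref{teoa} (the paper merely phrases your choice $u_0=k^{-(n-2)/2}$ as a reduction to $k=1$ by rescaling the metric). You are in fact slightly more careful than the paper in addressing how to restore the strict inequality $\min H < kH_0 < \max H$ needed to survive the perturbation, and in flagging the degenerate sub-case $H_0=0$ with $H$ one-signed, which the paper's one-line "arguing as in Theorem \ref{teoa}" silently skips.
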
 
 \begin{proof}
We  prove item $(a)$ since item $(b)$ is entirely similar.

It is enough to prove the result for $k=1$, since we can multiply the metric by an appropriate constant, see \eqref{confchange}.
By the Perturbation Theorem \ref{perttheo} given $\varepsilon>0$, there exists $u_{1} \in C^{\infty}(M)$ sufficiently close to $u_{0}=1$ such that 
$$\|F_2(1)-F_2(u_1)\|_{\infty}<\varepsilon,$$
and $F_2'(u_1)$ is invertible, where $F_2(1)=(0,H_0)$ and $F_2$ is defined in \eqref{eq019}.
By using the approximation Lemma \ref{appr} and the Inverse Function Theorem for Banach spaces, we present the argument as the proof of Theorem \ref{teoa} (Section \ref{proof-theorem-1-2}). This approach enables us to arrive at our intended result.
 \end{proof}

For $\lambda_1(\mathcal L_g)<0$ we have the following result.

\begin{theorem}\label{lambda<0}
Let $(M^n,g)$ be a compact Riemannian manifold with boundary,  dimension $n\geq 3$ and $\lambda_1(\mathcal L_g)<0$. Let $h\in C^\infty(\partial M)$ and $f\in C^\infty(M)$.
\begin{itemize}
    \item[(a)] $h\in CE^0(g)$ if and only if $h$ is negative somewhere.
    \item[(b)] $f\in CE(g)$ if and only if $f$ is negative somewhere.
\end{itemize}
\end{theorem}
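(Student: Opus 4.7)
The plan is to reduce both statements to Lemma \ref{help} via the Escobar-Yamabe solution of the boundary Yamabe problem, and to extract necessity by testing the first eigenvalue equation against its (positive) eigenfunction. This mirrors the strategy of Theorem \ref{teoa} in Section \ref{proof-theorem-1-2}, with the Escobar-Yamabe theorem playing the role that the Osgood-Phillips-Sarnak uniformization theorem played for surfaces. I will describe the argument for part (a); part (b) is entirely symmetric.

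For necessity in (a), I would start by noting that if $h \in CE^{0}(g)$ then there exists $\bar g$ conformally equivalent to $g$ with $R_{\bar g} = 0$ and $H_{\bar g} = h$. Since the sign of $\lambda_1(\mathcal L_g)$ is a conformal invariant, $\lambda_1(\mathcal L_{\bar g}) < 0$ still holds. Let $\varphi_1 > 0$ be a first eigenfunction for $(\mathcal L_{\bar g}, \mathcal B_{\bar g})$; multiplying $\mathcal L_{\bar g}\varphi_1 = \lambda_1\varphi_1$ by $\varphi_1$, integrating by parts, and inserting the boundary relation $\frac{2}{n-2}\frac{\partial \varphi_1}{\partial \nu_{\bar g}} = -h\varphi_1$, I would obtain
\begin{equation*}
\frac{4(n-1)}{n-2}\int_M |\nabla_{\bar g}\varphi_1|^2 \, dv \;+\; 2(n-1)\int_{\partial M} h\,\varphi_1^2 \, da \;=\; \lambda_1(\mathcal L_{\bar g}) \int_M \varphi_1^2 \, dv \;<\; 0.
\end{equation*}
Since the first summand is non-negative, $\int_{\partial M} h\,\varphi_1^2\,da < 0$, and so $h$ must be negative somewhere on $\partial M$. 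The corresponding computation for (b) uses the Neumann condition arising from $H_{\bar g} = 0$ and yields $\int_M f\,\varphi_1^2\,dv < 0$, forcing $f$ to be negative somewhere.

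For sufficiency in (a), assuming $h$ is negative somewhere, I would first use Escobar's resolution of the boundary Yamabe problem (available since $\lambda_1(\mathcal L_g) < 0$) to replace $g$ by a pointwise conformal metric with $R_g \equiv 0$ and $H_g \equiv H_0$ for some negative constant $H_0$. Since $\min h < 0$, a constant $k > 0$ can be selected so that $\min h \leq k H_0 \leq \max h$ (take $k$ small when $\max h \geq 0$, and $k \in [\max h / H_0,\;\min h / H_0]$ otherwise). Lemma \ref{help}(a) then furnishes a scalar flat metric conformally equivalent to $g$ whose mean curvature is exactly $h$, so $h \in CE^0(g)$. Part (b) is handled identically after reducing to $R_g \equiv R_0 < 0$ and $H_g \equiv 0$ and applying Lemma \ref{help}(b). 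The technical obstacle is absorbed into Lemma \ref{help}, whose proof rests on the Perturbation Theorem \ref{perttheo}, the Approximation Lemma \ref{appr}, and the inverse function theorem in Banach spaces; once those are in hand, the only remaining task is choosing the scaling constant $k$, which the sign hypothesis on $h$ (resp.\ $f$) guarantees.
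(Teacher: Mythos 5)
Your proposal is correct and follows essentially the same route as the paper: sufficiency is reduced, via a pointwise conformal scalar-flat metric with constant negative mean curvature (resp.\ constant negative scalar curvature and minimal boundary), to Lemma~\ref{help}, and necessity comes from a positive first eigenfunction together with conformal invariance of the sign of the eigenvalue. The only cosmetic difference is in the necessity step: the paper pairs the equation $\mathcal B_{\varphi^*g}u=hu^{\frac{n}{n-2}}$ with the positive eigenfunction of \eqref{eing2} and uses $\sigma_1(\mathcal B_{\varphi^*g})<0$, whereas you integrate the eigenvalue equation \eqref{eing1} for $\overline g$ against its own eigenfunction and use $\lambda_1(\mathcal L_{\overline g})<0$; the two computations are equivalent.
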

\begin{proof}
We  only prove  item $(a)$ since item $(b)$ is entirely analogous.
Suppose $h\in CE^0(g)$. Then there exists a diffeomophism $\varphi$ and a smooth positive function $u$ such that $\overline g:=u^{\frac{4}{n-2}}\varphi^*g$ is scalar flat and has mean curvature equal to $h$. Thus $\mathcal L_{\varphi^*g}u=0$ and $\mathcal B_{\varphi^*g}u=hu^{\frac{n}{n-2}}$. Let $\psi$ be the first eigenfunction of \eqref{eing2}. This implies that
\begin{align*}
    \sigma_1(\mathcal B_{\varphi^*g})\langle \psi,u \rangle_{L^2(\partial M)} & =\langle \mathcal B_{\varphi^*g}\psi,u \rangle_{L^2(\partial M)}=\langle \psi,\mathcal B_{\varphi^*g}u \rangle_{L^2(\partial M)}\\
    &=\langle\psi,hu^{\frac{n}{n-2}} \rangle_{L^2(\partial M)}.
\end{align*}

According to Proposition \ref{escobar}, we establish that $\sigma_1(\mathcal B_{\varphi^*g})<0$. Given that both $\psi$ and $u$ are positive functions, it follows that $h$ must be negative somewhere.

Now, suppose that $h\in C^\infty(M)$ is negative somewhere. By \cite[Remark 6.4]{CV}, there exists a pointwise conformal scalar flat metric $g_0$ with mean curvature $H_{g_0}\equiv -1$. The result follows by Lemma \ref{help}.
\end{proof}

For $\lambda_1(\mathcal L_g)=0$ we have the following result.
\begin{theorem}\label{teo003}
Let $(M^n,g)$ be a compact Riemannian manifold with boundary,  dimension $n\geq 3$ and $\lambda_1(\mathcal L_g)=0$. Let $h\in C^\infty(\partial M)$ and $f\in C^\infty(M)$.
\begin{itemize}
    \item[(a)] $h\in CE^0(g)$ if and only if $h$ changes sign or is identically zero.
    \item[(b)] $h\in CE(g)$ if and only if $f$ changes sign or is identically zero.
\end{itemize}
\end{theorem}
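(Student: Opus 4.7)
The plan is to prove both directions by mimicking the structure of the proof of Theorem \ref{lambda<0}, with the critical eigenvalue/eigenfunction bookkeeping adjusted to account for the equality $\lambda_1(\mathcal L_g)=\sigma_1(\mathcal B_g)=0$ guaranteed by Proposition \ref{escobar}. The necessity direction is an integration-by-parts argument against the unique positive first eigenfunction, and the sufficiency direction reduces, via the conformal invariance of the sign of $\lambda_1$, to an application of Lemma \ref{help} with vanishing constant boundary (resp.\ interior) curvature.

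For the necessity part of (a), suppose $h\in CE^0(g)$, so $\overline g=u^{4/(n-2)}\varphi^*g$ with $u>0$ satisfies $\mathcal L_{\varphi^*g}u=0$ in $M$ and $\mathcal B_{\varphi^*g}u=h u^{n/(n-2)}$ on $\partial M$. Since the sign of $\lambda_1$ is a conformal invariant, $\lambda_1(\mathcal L_{\varphi^*g})=\sigma_1(\mathcal B_{\varphi^*g})=0$, so by Proposition \ref{escobar} there is a strictly positive $\psi\in C^\infty(M)$ with $\mathcal L_{\varphi^*g}\psi=0$ in $M$ and $\mathcal B_{\varphi^*g}\psi=0$ on $\partial M$. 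The pair $(\mathcal L,\mathcal B)$ is self-adjoint in the sense of the Green-type identity
\begin{equation*}
\int_M\bigl(\psi\,\mathcal L_{\varphi^*g}u-u\,\mathcal L_{\varphi^*g}\psi\bigr)\,dv_{\varphi^*g}=-2(n-1)\int_{\partial M}\bigl(\psi\,\mathcal B_{\varphi^*g}u-u\,\mathcal B_{\varphi^*g}\psi\bigr)\,da_{\varphi^*g},
\end{equation*}
which one verifies directly from \eqref{confchange} using Green's first identity. Plugging in the equations for $u$ and $\psi$, the left-hand side vanishes and so does the term involving $\mathcal B_{\varphi^*g}\psi$, yielding $\int_{\partial M}\psi\, h\, u^{n/(n-2)}\,da_{\varphi^*g}=0$. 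Since $\psi,u>0$, the function $h$ must change sign or vanish identically. The analogous argument for (b) starts from $\mathcal L_{\varphi^*g}u=fu^{(n+2)/(n-2)}$, $\mathcal B_{\varphi^*g}u=0$ and produces $\int_M\psi\, f\, u^{(n+2)/(n-2)}\,dv_{\varphi^*g}=0$.

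For the sufficiency in (a), apply Proposition \ref{escobar} to obtain a pointwise conformal metric $g_0=v^{4/(n-2)}g$ with $R_{g_0}\equiv 0$ and $H_{g_0}\equiv 0$ (the sign forced by $\lambda_1(\mathcal L_g)=0$). If $h\equiv 0$, then $\overline g=g_0$ already witnesses $h\in CE^0(g)$. Otherwise $h$ changes sign, so $\min h<0<\max h$ and for any $k>0$ we have $\min h\leq k\cdot 0\leq \max h$, i.e.\ the hypothesis of Lemma \ref{help}(a) (applied to $g_0$, whose mean curvature constant is $H_0=0$) is satisfied, producing a conformally equivalent metric with scalar curvature zero and mean curvature $h$. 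Item (b) is handled identically using Lemma \ref{help}(b) with $R_0=0$: if $f\equiv 0$ take $g_0$, and if $f$ changes sign then $\min f<0<\max f$ supplies the required constant $k$.

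The only step requiring real care is verifying that, at $\lambda_1=0$, the normalized positive eigenfunction used in the necessity argument indeed satisfies both $\mathcal L_{\varphi^*g}\psi=0$ and $\mathcal B_{\varphi^*g}\psi=0$ simultaneously; this is the content of Proposition \ref{escobar} combined with the fact that the two eigenvalue problems \eqref{eing1} and \eqref{eing2} share a sign, so the zero eigenfunction can be taken to be $\psi=v^{(n-2)/4}\cdot 1$ where $v$ is the pointwise conformal factor to the scalar-flat, minimal-boundary representative. Beyond this bookkeeping, no new idea is required; the argument is the natural zero-eigenvalue limit of the $\lambda_1<0$ argument of Theorem \ref{lambda<0} combined with the $H_0=0$ degeneration of Lemma \ref{help}.
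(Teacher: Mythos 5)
Your proposal is correct and follows essentially the same route as the paper: the paper's proof simply invokes Proposition \ref{escobar} to pass to a scalar-flat, minimal-boundary conformal representative and then says "the rest is similar to Theorem \ref{lambda<0}," which is exactly the eigenfunction-pairing necessity argument and the Lemma \ref{help} sufficiency argument (with $H_0=0$, resp.\ $R_0=0$) that you spell out. Your Green-identity bookkeeping, including the constant $-2(n-1)$, is the correct explicit form of the self-adjointness the paper uses implicitly.
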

\begin{proof}
As Theorem \ref{lambda<0},
 we   prove  item $(a)$. Let $\varphi>0$ be the eigenfunction of the problem \eqref{eing2}. 
 As established in Proposition \ref{escobar}, we have $\sigma_1(\mathcal{B}_g) = 0$. This implies the existence of a metric that is conformally equivalent to $g$, which is  scalar flat and has  minimal boundary. The remainder of the proof follows a similar structure to that of Theorem \ref{lambda<0}.
\end{proof}

Finally, when $\lambda_1(\mathcal L_g) > 0$, the following holds.
\begin{theorem}\label{lambda>0}
Let $(M^n,g)$ be a compact Riemannian manifold with boundary,  dimension $n\geq 3$ and $\lambda_1(\mathcal L_g)>0$. Let $h\in C^\infty(\partial M)$ and $f\in C^\infty(M)$.
\begin{itemize}
    \item[(a)] $h\in CE^0(g)$ if and only if $h$ is positive somewhere.
    \item[(b)] $f\in CE(g)$ if and only if $f$ is positive somewhere.
\end{itemize}
\end{theorem}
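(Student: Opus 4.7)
The plan is to mirror the argument of Theorem \ref{lambda<0}, flipping every sign. Since $\lambda_1(\mathcal L_g)>0$, the sign equivalence recalled at the start of this section gives $\sigma_1(\mathcal B_g)>0$, and both of these conditions are preserved under pullback by any diffeomorphism (they depend only on the geometry). The first eigenfunctions of \eqref{eing1} and \eqref{eing2} are strictly positive by Proposition \ref{escobar}.

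For the necessity in (a), assume $h\in CE^0(g)$, so there exist a diffeomorphism $\varphi$ and a positive $u\in C^\infty(M)$ with $\overline g=u^{4/(n-2)}\varphi^{*}g$ scalar flat and of mean curvature $h$; equivalently $\mathcal L_{\varphi^{*}g}u=0$ in $M$ and $\mathcal B_{\varphi^{*}g}u=h\,u^{n/(n-2)}$ on $\partial M$. Let $\psi>0$ be the first eigenfunction of the boundary eigenvalue problem \eqref{eing2} for the metric $\varphi^{*}g$. Self-adjointness of $(\mathcal L_{\varphi^{*}g},\mathcal B_{\varphi^{*}g})$ yields
$$
\sigma_1(\mathcal B_{\varphi^{*}g})\,\langle \psi,u\rangle_{L^2(\partial M)}=\langle \mathcal B_{\varphi^{*}g}\psi,u\rangle_{L^2(\partial M)}=\langle \psi,\mathcal B_{\varphi^{*}g}u\rangle_{L^2(\partial M)}=\langle \psi,h\,u^{n/(n-2)}\rangle_{L^2(\partial M)}.
$$
Since $\sigma_1(\mathcal B_{\varphi^{*}g})>0$ and $\psi,u>0$, the left-hand side is strictly positive, forcing $h$ to be positive somewhere on $\partial M$. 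The statement for (b) is obtained by the same pairing, but using instead the first eigenfunction of \eqref{eing1} and the conformal invariance of the sign of $\lambda_1(\mathcal L_g)$.

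For the sufficiency in (a), I would invoke Escobar's resolution of the Yamabe boundary problem in the positive case (see \cite{E1,E3}): when $\lambda_1(\mathcal L_g)>0$ there is a pointwise conformal metric $g_0$ to $g$ with $R_{g_0}\equiv 0$ and constant positive mean curvature $H_{g_0}\equiv H_0>0$. If $h\in C^\infty(\partial M)$ is positive somewhere, then $\max h>0$, so one can pick a constant $k>0$ with $\min h\leq k\,H_0\leq \max h$. Lemma \ref{help}(a) applied to $g_0$ then furnishes a metric conformally equivalent to $g_0$, and hence to $g$, that is scalar flat and has mean curvature $h$, establishing $h\in CE^0(g)$. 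Part (b) is handled symmetrically using the other half of Proposition \ref{escobar}, producing a pointwise conformal metric of constant positive scalar curvature and minimal boundary, followed by Lemma \ref{help}(b).

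The only point where this proof departs from Theorem \ref{lambda<0} is the existence of the ``normalized'' background metric $g_0$ with $H_{g_0}\equiv 1$ (respectively $R_{g_0}\equiv 1$ with minimal boundary). This is precisely the content of Escobar's Yamabe-type results under the positivity of the relevant conformal eigenvalue, and is the only place where genuine analysis beyond Lemma \ref{help} and the Perturbation Theorem \ref{perttheo} enters; once that normalization is available, the rest of the argument is a line-by-line adaptation of the negative case with all inequalities reversed.
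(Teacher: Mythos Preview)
Your proof is correct and follows exactly the paper's approach: the paper's own proof is a single line stating that the argument is ``entirely analogous to Theorem \ref{lambda<0}, using the solution of the Yamabe Problem with boundary,'' which is precisely what you have spelled out. The only caveat is that the paper cites a broader list of references (\cite{A,BS,E1,Mayer-Ndiaye,chen2010conformal}) for the positive Yamabe problem with boundary, since Escobar's original work in \cite{E1,E3} did not settle every case; you should appeal to the full resolution rather than to \cite{E1,E3} alone.
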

\begin{proof}
The proof is  analogous to Theorem \ref{lambda<0}, using the solution of the Yamabe Problem with boundary (see \cite{A,BS,E1,Mayer-Ndiaye,chen2010conformal}).
\end{proof}

  By Theorems \ref{lambda<0}, \ref{teo003} and \ref{lambda>0} we obtain Theorems \ref{teob} and \ref{teoc}.

\subsection{Existence and nonexistence of pointwise conformal metric}\label{pointn>2}

  In this section we extend some results of \cite{KW3} to the context of manifold with boundary. Let $(M^n,g)$ be a compact manifold with nonempty boundary and dimension $n\geq 3$. The existence of a scalar flat metric in the conformal class $[g]$  with mean curvature  $H\in C^{\infty}(\partial M)$  is equivalent to the existence of a positive solution to the problem:
\begin{equation}\label{confchangeR=0}
\mathcal L_g(u)=0\mbox{ in }M\quad\mbox{ and }\quad\mathcal B_g(u)=Hu^{\frac{n}{n-2}}\mbox{ on }\partial M.
\end{equation}

Similarly, the existence of a metric in  the conformal class $[g]$  with scalar curvature $R\in C^{\infty}(M)$ and minimal boundary is equivalent to the existence of a positive solution to the problem
\begin{equation}\label{confchangeH=0}
\mathcal L_g(u)=Ru^{\frac{n+2}{n-2}}\mbox{ in }M\quad\mbox{ and }\quad\mathcal B_g(u)=0\mbox{ on }\partial M.
\end{equation}
Here $\mathcal L_g$ and $\mathcal B_g$ are defined in \eqref{confchange}. If such a solution exists, then $H\in\mbox{PC}^0(g)$ and $R\in\mbox{PC}(g)$, respectively.
  
  Again, our analysis will be divided in three cases.
  
\subsubsection{ {\bf Negative curvature case:} $\lambda_1(\mathcal L_g)<0$}
 As pointed out in Section \ref{sec001}, $\sigma_1(\mathcal{B}_g) < 0$ if and only if $\lambda_1(\mathcal{L}_g) < 0$ (see \cite{E1,E3}). This implies the existence of a pointwise conformal scalar-flat metric with a constant negative mean curvature, as well as a pointwise conformal metric with constant negative scalar curvature and minimal boundary (see, for instance, \cite[Remark 6.4]{CV}). We begin with the following theorem.

\begin{theorem}\label{solutionpointwise} Let $(M^n,g)$  be a compact manifold with nonempty boundary and dimension $n\geq 3$.
\begin{itemize}
    \item[(a)] Suppose  that $g$ is scalar flat metric with  constant mean curvature $H_0<0.$ If \eqref{confchangeR=0} has a positive solution for some $H\in C^{\infty}(\partial M),$ then the unique solution of the problem
     \begin{equation}\label{solution 1}
 \Delta_g \varphi=0  \text { in } M    \quad\mbox{ and }\quad
\frac{\partial \varphi}{\partial \nu_g}-H_0\varphi=-H  \text { on } \partial M
\end{equation}
must be positive and $\int_{\partial M}Hda<0$. Moreover, there exists a function $H \in C^{\infty}(\partial M)$ with $\int_{\partial M} H da<0$ such that $H\not\in\mbox{PC}^0(g)$. 
    \item[(b)] Suppose that $g$  has constant  scalar curvature $R_0<0$ and minimal boundary. If \eqref{confchangeH=0} has a positive solution for some $R\in C^{\infty}( M),$ then  the unique solution of the problem
    \begin{equation}\label{solution 2}
    \Delta_g \varphi+\frac{R_0}{n-1}\varphi= \frac{R}{n-1} \text { in } M\quad\mbox{ and }\quad \frac{\partial \varphi}{\partial \nu_g}=0  \text { on } \partial M
\end{equation}
must be positive and $\int_{ M}Rdv<0$. Moreover, there exists a function $R \in C^{\infty}(M)$ with $\int_{M} R dv<0$ such that $R\not\in\mbox{PC}(g)$.
\end{itemize}
\end{theorem}
\begin{proof}
Let us prove item (a). If $v=u^{-\frac{2}{n-2}} $,   then \eqref{confchangeR=0} is equivalent to
\begin{equation*}
2\Delta_g v-n\frac{|\nabla_g v|^2}{v}=0  \text { in } M\quad\mbox{ and }\quad \frac{\partial v}{\partial \nu_g}-H_0 v=-H  \text { on } \partial M.
\end{equation*}
  
  Let $\varphi$ be the unique solution of \eqref{solution 1}.   Setting $\Psi:=\varphi-v,$ we obtain that $ \Delta_g \Psi< 0$ in $M$ and $\partial\Psi/\partial\nu_g=H_0 \Psi$ on $\partial M$. As in  Proposition \ref{solutionpointwisen=2}, we obtain  $\varphi\geq v>0$. Finally, observe that the weaker conditions $\int_{\partial M}Hda<0$ is obtained by integration of \eqref{solution 1}. 

In an analogous way, the item (b) is obtained considering $v=u^{-\frac{4}{n-2}}$. Then  \eqref{confchangeH=0} is equivalent to
$$\Delta_gv+\frac{R_0}{n-1}v=\frac{1}{n-1}R+\frac{n+2}{4}\frac{|\nabla_g v|^2}{v}\mbox{ in }M\quad\mbox{ and }\quad \frac{\partial v}{\partial\nu_g}=0\mbox{ on }\partial M.$$
Thus, if we define $\Psi:=\varphi-v$ we get $\Delta_g\Psi+\frac{R_0}{n-1}\Psi<0$ in $M$ and $\partial\Psi/\partial\nu_g=0$ on $\partial M$. Arguing as in the Proposition \ref{solutionpointwisen=2} we obtain the result.
\end{proof}

%\begin{remark}\label{remarkmalchiodi}
%It was showed in \cite{MR4460167} that the existence of a metric of  negative scalar curvature  and minimal boundary implies that for any smooth function $R<0$ and $H\in C^{\infty}(\partial M)$ satisfying 
%\begin{equation}\label{scaling-inv}
%\frac{H}{\sqrt{|R|}}<\frac{1}{\sqrt{n(n-1)}}\quad \mbox{ on }\partial M,
%\end{equation}
 %there is a  pointwise conformal metric whose scalar curvature is $R$ and mean curvature is $H$.
%\end{remark}

Also, we can prove the following result.

\begin{proposition}\label{prop001}
Let $(M^n,g)$  be a compact manifold with nonempty boundary and dimension $n\geq 3$.
\begin{itemize}
    \item[(a)]  Suppose that $g$ is a scalar-flat metric with constant mean curvature $H_0<0$. Let $H,H_1\in C^\infty(\partial M)$. If  $H\in\mbox{PC}^0(g)$ and either $H_1\leq H$ or $H_1=k_1H$, for some constant $k_1>0$, then $H_1\in\mbox{PC}^0(g)$. 
    
    \item[(b)] Suppose that $g$ is a metric of constant  scalar curvature $R_0<0$ and minimal boundary. Let $R,R_1\in C^\infty(M)$. If $R\in\mbox{PC}(g)$ and either $R_1\leq R$ or $R_1=k_2R$, for some constant $k_2>0$, then $R_1\in\mbox{PC}(g)$.
\end{itemize}
\end{proposition}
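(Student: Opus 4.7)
The plan is to invoke the method of sub- and super-solutions, specifically the boundary-value version of \cite[Theorem 2.3.1]{Sa} that was used in the proof of Theorem~\ref{teo001}(c), for the nonlinear boundary value problems \eqref{confchangeR=0} and \eqref{confchangeH=0}. I will outline part (a); part (b) follows by the same scheme with obvious modifications.

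First, handle the multiplicative case $H_1 = k_1 H$ with $k_1 > 0$. If $u > 0$ solves \eqref{confchangeR=0} with prescribed mean curvature $H$, set $\overline u := k_1^{-(n-2)/2} u$. Linearity of $\mathcal L_g$ yields $\mathcal L_g \overline u = 0$, and a direct calculation using linearity of $\mathcal B_g$ gives
\[
\mathcal B_g \overline u \;=\; k_1^{-(n-2)/2}\, H\, u^{n/(n-2)} \;=\; (k_1 H)\, \overline u^{\,n/(n-2)} \;=\; H_1\, \overline u^{\,n/(n-2)},
\]
so $H_1 \in \mbox{PC}^0(g)$. For part (b) the analogous rescaling is $\overline u := k_2^{-(n-2)/4} u$, which changes the prescribed scalar curvature $R$ into $k_2 R$.

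Next, handle the domination case $H_1 \leq H$. The positive solution $u$ of \eqref{confchangeR=0} associated with $H$ is automatically a super-solution for the $H_1$-problem, because $\mathcal L_g u = 0$ and $\mathcal B_g u = H u^{n/(n-2)} \geq H_1 u^{n/(n-2)}$ on $\partial M$. For a sub-solution I would test the constant $\underline u \equiv \epsilon > 0$: using $R_g \equiv 0$ and $H_g \equiv H_0$ one has $\mathcal L_g \epsilon = 0$ in $M$ and $\mathcal B_g \epsilon = H_0 \epsilon$ on $\partial M$, so the sub-solution inequality reduces to the pointwise bound $H_0 \leq H_1 \epsilon^{2/(n-2)}$ on $\partial M$. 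Since $H_0 < 0$ is a fixed negative constant and $H_1$ is bounded on the compact set $\partial M$, this inequality holds for all sufficiently small $\epsilon > 0$; shrinking $\epsilon$ further to ensure $\epsilon \leq \min_M u$ yields the ordering $\underline u \leq \overline u$. The sub-/super-solution theorem then produces a positive smooth solution $v$ with $\epsilon \leq v \leq u$ solving \eqref{confchangeR=0} for $H_1$, establishing $H_1 \in \mbox{PC}^0(g)$. For part (b) the super-solution is again a positive solution $u$ for $R$, while the constant $\underline u \equiv \epsilon$ is a sub-solution because $\mathcal B_g \epsilon = 0$ (since $H_g \equiv 0$) and $\mathcal L_g \epsilon = R_0 \epsilon \leq R_1 \epsilon^{(n+2)/(n-2)}$ for small $\epsilon > 0$, thanks to $R_0 < 0$.

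The main potential obstacle is verifying that the sub-/super-solution iteration applies cleanly to this boundary value problem with the nonlinear Robin-type condition when $H_1$ (respectively $R_1$) is allowed to change sign; however, this is precisely the same machinery already invoked in Theorem~\ref{teo001}(c), so once this is granted the proof reduces entirely to the elementary pointwise inequalities above and the explicit rescaling in the multiplicative case.
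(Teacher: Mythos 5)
Your proof is correct and takes essentially the same approach as the paper: the paper also uses the given positive solution for $H$ (resp. $R$) as the upper solution, a small positive multiple of the first eigenfunction of \eqref{eing2} as the lower solution (which, since $g$ is scalar-flat with constant mean curvature $H_0$, is exactly your constant $\epsilon$), and a constant rescaling (phrased there as rescaling the metric) for the multiplicative case, concluding via \cite[Theorem 2.3.1]{Sa}.
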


\begin{proof}
As before, we  prove item (a). First we prove the following claim.

\medskip

\noindent\textbf{Claim:}  If $\sigma_1(\mathcal{B}_g)<0$, then one can  find a positive lower solution of \eqref{confchangeR=0}.

Let $u_-=\gamma \varphi,$ where $\varphi$ is the first normalized eigenfunction of the problem \eqref{eing2}, which is positive, and $\gamma$ is a positive constant. If $H_1\in C^\infty(M)$, then $\mathcal L_g(u_-)=0$ and
$
\mathcal{B}_g(u_-)-H_1(u_-)^{\frac{n}{n-2}}=\gamma\varphi(\sigma_1(\mathcal B_g)-H_1(\gamma\varphi)^{\frac{4}{n-2}}).
$
Since $H_1$ and $\varphi$ are bounded and $\gamma$ can be chosen sufficiently small, the $u_-$ is a lower solution of \eqref{confchangeR=0}, with $H$ replaced by $H_1$.

\medskip

If $H\in\mbox{PC}^0(g)$, then there exists a positive smooth function $u^+$ such that $\mathcal L_g(u^+)=0$ and $\mathcal B_g(u^+)=H(u^+)^{\frac{n}{n-2}}$. Assuming $H_{1} \leq H$, we obtain that $\mathcal B_g(u^+)\geq H_1(u^+)^{\frac{n}{n-2}}$, which implies that $u^+$ is a upper solution of \eqref{confchangeR=0}, with $H$ replaced by $H_1$. For $\gamma>0$ small enough we have $u_-\leq u^+$. By  Theorem 2.3.1 of \cite{Sa} we conclude that $H_1\in\mbox{PC}^0(g)$.

Furthermore, if we have $H_1 = k_1 H$, where $k_1 > 0$ is a constant, and since $H \in \mathcal{PC}(g)$, it is sufficient to choose an appropriate scalar multiple of the metric $g_0$ such that both $R_{g_0} = 0$ and $H_{g_0} = H$.
\end{proof}

In fact, we can say more.

\begin{proposition}\label{prop002}
Let $(M^n,g)$  be a compact manifold with nonempty boundary and dimension $n\geq 3$.
\begin{itemize}
   \item[(a)]  Suppose that $g$ is scalar-flat with constant mean curvature
   $H_0$. Given $H\in C^\infty(\partial M)$ with $\int_{\partial M}Hda<0$, there exists a constant $k(H)<0$ such that $H\in\mbox{PC}^0(g)$, provided  $k(H)< H_0<0$.
   \item[(b)]   Suppose that $g$ has minimal boundary and constant scalar curvature $R_0<0$. Given $R\in C^\infty(M)$ with $\int_M R dv<0$, there exists a constant $k(R)<0$  such that $R\in\mbox{PC}(g)$, provided $k(R)< R_0<0$.
    \end{itemize}
\end{proposition}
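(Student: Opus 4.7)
My plan is to reduce Proposition \ref{prop002} to Proposition \ref{prop001}, exploiting that $\mbox{PC}^0(g)$ and $\mbox{PC}(g)$ depend only on the conformal class $[g]$, so one is free to replace $g$ by any pointwise conformal representative.

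For part (a), I would first verify that the hypothesis $\int_{\partial M}H_0\,da<0$ forces $\sigma_1(\mathcal B_g)<0$. Since $g$ is scalar flat, the variational characterization of $\sigma_1(\mathcal B_g)$ associated to \eqref{eing2}, evaluated at the test function $\varphi\equiv 1$, yields $\sigma_1(\mathcal B_g)\leq \int_{\partial M}H_0\,da/\mbox{Area}(\partial M)<0$. Equivalently $\lambda_1(\mathcal L_g)<0$, and the existence result recalled at the start of this subsection (based on \cite[Remark 6.4]{CV}) supplies a metric $\tilde g\in[g]$ which is scalar flat with \emph{constant} mean curvature $\tilde H_0<0$. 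Because $\mbox{PC}^0(g)=\mbox{PC}^0(\tilde g)$, it suffices to realize the prescribed functions inside the conformal class of $\tilde g$.

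Next I would populate $\mbox{PC}^0(\tilde g)$ with every negative constant. For each $c<0$, the homothetic metric $\lambda^{4/(n-2)}\tilde g$ with $\lambda:=(\tilde H_0/c)^{(n-2)/2}>0$ remains scalar flat and has mean curvature $\tilde H_0\lambda^{-2/(n-2)}=c$, so $c\in\mbox{PC}^0(\tilde g)$. Proposition \ref{prop001}(a), applied to the metric $\tilde g$ whose mean curvature is the constant $\tilde H_0<0$, then promotes this to a whole comparison family: any $H\in C^\infty(\partial M)$ with $H\leq c$ pointwise for some constant $c<0$ belongs to $\mbox{PC}^0(\tilde g)$. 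Setting $k(H_0):=\tilde H_0$, if $H\in C^\infty(\partial M)$ satisfies $k(H_0)\leq H<0$, then by compactness of $\partial M$ we have $c:=\max_{\partial M}H<0$, hence $H\leq c$ pointwise and therefore $H\in\mbox{PC}^0(\tilde g)=\mbox{PC}^0(g)$, as required.

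Part (b) follows by the same template. From $H_g\equiv 0$ and $\int_M R_0\,dv<0$, the analogous Rayleigh quotient for \eqref{eing1} at $\varphi\equiv 1$ gives $\lambda_1(\mathcal L_g)<0$, after which \cite[Remark 6.4]{CV} provides $\tilde g\in[g]$ with minimal boundary and constant negative scalar curvature $\tilde R_0$. Homotheties of $\tilde g$ again sweep out every negative constant as the scalar curvature of a minimal-boundary conformal representative, and Proposition \ref{prop001}(b) together with the same compactness/pointwise-maximum argument produces the desired $k(R_0)$. I do not foresee a serious obstacle: the real content is already packaged in Proposition \ref{prop001} and in the existence of a constant-curvature conformal representative recalled in the paper; the only delicate step is the variational sign check for $\lambda_1(\mathcal L_g)$, which is immediate from plugging $\varphi\equiv 1$ into the Rayleigh quotient.
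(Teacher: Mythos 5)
Your proposal is correct, but it takes a genuinely different route from the paper. The paper does not normalize the conformal representative: it keeps the non-constant background mean curvature $H_0$ and runs the sub/supersolution method directly on \eqref{eq030}. The lower solution is the same eigenfunction barrier $\gamma\varphi$ as in Proposition \ref{prop001} (valid because, exactly as you check via the Rayleigh quotient at $\varphi\equiv 1$, one has $\sigma_1(\mathcal B_g)<0$), while the upper solution is built explicitly: one solves the linear problem $\Delta_g\Psi=-\mathrm{vol}(M)^{-1}\int_{\partial M}H\,da$ with $\partial\Psi/\partial\nu_g=2\overline H-H$, sets $v:=\Psi+\gamma$ for $\gamma$ large, and transforms back through $u^+=v^{-(n-2)/2}$; the boundary compatibility condition $2\overline H\le H_0 v$ needed for this barrier is precisely what produces the threshold $k(H_0)$. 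Your argument instead first passes to $\tilde g\in[g]$ with constant negative mean curvature (legitimate, since the paper itself invokes \cite[Remark 6.4]{CV} for this and $\mbox{PC}^0$ is a conformal invariant), sweeps out all negative constants by homothety, and then applies the comparison statement of Proposition \ref{prop001}(a). This is sound, and it buys something the paper's barrier does not: since any $H<0$ on the compact boundary satisfies $H\le\max_{\partial M}H<0$, your argument shows \emph{every} strictly negative smooth function lies in $\mbox{PC}^0(g)$, so the lower bound $k(H_0)\le H$ is superfluous — the existential form of the proposition is of course still satisfied (any negative constant serves as $k(H_0)$), but it is worth flagging that the threshold in the paper appears to be an artifact of the particular explicit supersolution rather than a genuine obstruction. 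The same remarks apply verbatim to part (b).
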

\begin{proof}
Let us prove item (a). In order to demonstrate that $H\in \mathrm{PC}^0(g)$, we need to find a positive solution $u$ to \eqref{confchangeR=0}, which takes the form:
\begin{equation}\label{eq030}
\Delta_g u = 0 \text{ in } M \quad \text{and} \quad \frac{2}{n-2}\frac{\partial u}{\partial\nu_g} + H_0 u = H u^{n/(n-2)} \text{ on } \partial M,
\end{equation}
with $H_0<0$ constant. As the proof of Proposition \ref{prop001} there exists a positive lower solution $u_-$, which can be made small enough. Thus, we need only to find an upper solution $u^+$. The result will follow by \cite[Theorem 2.3.1]{Sa}.

To find an upper solution to \eqref{eq030} let us show that the following auxiliary problem has a solution:
\begin{equation}\label{eq029}
2\Delta_g v-n\frac{|\nabla_g v|^2}{v}\geq 0  \text { in } M\quad\mbox{ and }\quad \frac{\partial v}{\partial \nu_g}-H_0 v\leq -H  \text { on } \partial M.
\end{equation}

% XXXXX

% Similar to the proof of Proposition \ref{solutionpointwise}, the function $v=(u^+)^{-\frac{2}{n-2}}$ is a positive lower solution of 
% \begin{equation}\label{eq029}
% 2\Delta_g v-n\frac{|\nabla_g v|^2}{v}\geq 0  \text { in } M\quad\mbox{ and }\quad \frac{\partial v}{\partial \nu_g}-H_0 v\leq -H  \text { on } \partial M,
% \end{equation}

Let  $\Psi\in C^{\infty}(M)$  be a solution of the problem $\Delta_g \Psi=-\frac{1}{\mbox{vol}(M)}\int_{\partial M}Hda$ in $M$ and $\partial \Psi/\partial \nu_g=2\overline{H}-H$ on $\partial M$ (see \cite[Lemma 3.1]{phdthesis}), where $\overline H=\mbox{Area}(\partial M)^{-1}\int_{\partial M}Hda$.
Set $v:=\Psi+\gamma$, where the constant $\gamma$ is taken so large that  $v>0$ and 
$$2\Delta_gv-n\frac{|\nabla_g v|^2}{v}=2\Delta_g\Psi-n\frac{|\nabla_g \Psi|^2}{\Psi+\gamma}>0.$$
Note that such $\gamma$ does exist since $\Delta\Psi>0$. Since $\overline H<0$, there exists $H_0<0$ so that
$
2\overline{H}\leq H_0v.
$
Thus, $v$ is a solution to $\eqref{eq029}$. Finally, set $u^+:=v^{-\frac{n-2}{2}}$ and note that
$$\Delta_gu^+=-\frac{n-2}{4}\left(2\Delta_gv-n\frac{|\nabla_gv|^2}{v}\right)\leq 0$$
and
$$\frac{2}{n-2}\frac{\partial u^+}{\partial\nu_g}+H_0u^+=-v^{-\frac{n}{2}}\left(\frac{\partial v}{\partial\nu_g}-H_0v\right)\geq v^{-\frac{n}{2}}H=Hu^{\frac{n}{n-2}}.$$
This finishes item (a).

Item (b) is similar. Consider the auxiliary problem
\begin{equation}\label{eq031}
   \Delta_gv+\frac{R_0}{n-1}v-\frac{R}{n-1}\geq 0 \mbox{ in }M\quad\mbox{ and }\quad\frac{\partial v}{\partial\nu_g}\leq 0\mbox{ on }\partial M.
\end{equation}

% If $u^+$ is a upper solution of \eqref{confchangeH=0}, then $v=(u^+)^{-\frac{4}{n-2}}$ is a lower solution of
% \begin{equation}\label{eq031}
%    \Delta_gv+\frac{R_0}{n-1}v-\frac{R}{n-1}\geq\frac{n+2}{4}\frac{|\nabla_g v|^2}{v}\mbox{ in }M\quad\mbox{ and }\quad\frac{\partial v}{\partial\nu_g}\leq 0.
% \end{equation}

Let $\Psi$ be a solution of $\Delta_g \Psi=(R-\overline{R})/(n-1)$ in $M$ with $\partial \Psi/\partial \nu_g=0$ on $\partial M$, where $\overline{R}=\mbox{Vol}(M)^{-1}\int_MRdv<0$. Set 
$v:=\Psi+\gamma$,  where the constant $\gamma$ is so large that  $v>0$ and 
]\begin{equation}\label{eq041}
-\frac{\overline{R}}{2(n-1)}\geq \frac{n+2}{4} \frac{|\nabla_g v|^{2}}{v}=\frac{n+2}{4} \frac{|\nabla_g \Psi|^{2}}{\Psi+\gamma}.    
\end{equation}
Also, there exists $R_0 < 0$ so that $\overline{R} \leq R_0(\Psi + \gamma)$. With this choice, we conclude that  $v$ is a solution to \eqref{eq031}. Now define $u^+:=v^{-\frac{n-2}{4}}$. Using \eqref{eq031} and \eqref{eq041}, we obtain
\begin{align*}
    \frac{4(n-1)}{n-2}\Delta_gu^+&-R_0u^+  =-(n-1)v^{-\frac{n+2}{4}}\left(\Delta_gv-\frac{n+2}{4}\frac{|\nabla_g v|^2}{v}\right)\\
    & \leq -(n-1)v^{-\frac{n+2}{4}}\left(\frac{R}{n-1}-\frac{R_0}{n-1}v-\frac{n+2}{4}\frac{|\nabla_gv|^2}{v}\right)\\
    &\leq- Rv^{-\frac{n+2}{4}}=-R(u^+)^{\frac{n+2}{n-2}}
\end{align*}
in $M$ and $\partial u^+/\partial\nu_g\geq 0$ on $\partial M$. This implies that $u^+$ is an upper solution to \eqref{confchangeH=0}. The existence of a lower solution follows as item (a) of Proposition \ref{prop001}.
\end{proof}

\subsubsection{{\bf  Positive and zero curvature case:} } 
\label{case_2}
The cases where $\lambda_1(\mathcal{L}_g) > 0$ and $\lambda_1(\mathcal{L}_g) = 0$ have already been studied. If $\lambda_1(\mathcal{L}_g) > 0$, then there are nontrivial obstructions for a smooth function to belong to $PC^0(g)$ or $PC(g)$.

Indeed, for the unit ball $B^n \subset \mathbb{R}^n$, we have a nontrivial Kazdan-Warner type obstruction discovered by Escobar in Proposition 4.6 of \cite{EOB}. This result states that if $H \in C^\infty(\partial M)$ is the mean curvature of a pointwise conformal metric, then
$$
\int_{\partial B^{n}}\left\langle X, \nabla_g H\right\rangle da=0,
$$
where $X$ is a conformal Killing vector field on $\partial B^n$. This condition implies that problem \eqref{confchangeR=0} has no solutions for $H = Ax + B$, where $A \neq 0$. Similar obstructions for the Euclidean hemisphere $\mathbb{S}_+^n \subset \mathbb{R}^{n+1}$ were obtained in \cite{Ham}.

Finally, for $\lambda_1(\mathcal{L}_g) = 0$, J. Xu gives results on prescribing nonzero scalar and mean curvature problems  in \cite{xu2301conformal}.

\bibliography{references.bib}
\bibliographystyle{acm}

\end{document}